\newcommand{\cC}{\mathcal{C}}
\newcommand{\cD}{\mathcal{D}}
\newcommand{\cI}{\mathcal{I}}
\newcommand{\cJ}{\mathcal{J}}
\newcommand{\cM}{\mathcal{M}}
\newcommand{\cN}{\mathcal{N}}
\newcommand{\cQ}{\mathcal{Q}}
\newcommand{\Av}{\text{Av}}
\DeclareMathOperator{\Grid}{Grid}
\newcommand{\St}{\text{St}}
\DeclareMathOperator{\intv}{int}
\newcommand{\intx}{\intv_x}
\newcommand{\inty}{\intv_y}
\newcommand{\gw}{\mathrm{gw}}
\newcommand{\pw}{\mathrm{pw}}
\newcommand{\tw}{\mathrm{tw}}
\newcommand{\PPM}{\textsc{PPM}}
\newcommand{\PPPM}[1]{\textsc{#1-Pattern PPM}}
\theoremstyle{claimstyle}
\newtheorem{observation}[theorem]{Observation}
\theoremstyle{plain}
\newtheorem{problem}{Open problem}
\newcommand{\problemdef}[3] {
\vspace{\parskip}
\noindent
\begin{tabularx}{\linewidth}{| r X|}
\hline
\multicolumn{2}{|l|}{
\rule{0pt}{1.2em} \textsc{#1}} \\
 {\itshape Input:} & #2 \\ 
 {\itshape Question:} & #3\\
 \hline
\end{tabularx}
}
\newcommand{\Inc}{\mathrel{
		\begin{tikzpicture}[line cap=round, line join=round]
			\draw (0ex, 0ex) rectangle (1.5ex, 1.5ex)
			(0ex, 0ex) -- (1.5ex, 1.5ex);
		\end{tikzpicture}
}}
\newcommand{\Dec}{\mathrel{
		\begin{tikzpicture}[line cap=round, line join=round]
			\draw (0ex, 0ex) rectangle (1.5ex, 1.5ex)
			(1.5ex, 0ex) -- (0ex, 1.5ex);
		\end{tikzpicture}
}}
\title{A Complexity Dichotomy for Permutation Pattern Matching on Grid Classes}
\author{Vít Jelínek}
{Computer Science Institute, Charles University, Prague, Czechia}
{jelinek@iuuk.mff.cuni.cz}{https://orcid.org/0000-0003-4831-4079}{Supported by project 18-19158S of 
the Czech Science 
Foundation.}
\author{Michal Opler}
{Computer Science Institute, Charles University, Prague, Czechia}
{opler@iuuk.mff.cuni.cz}{https://orcid.org/0000-0002-4389-5807}{Supported by project 18-19158S of 
the Czech Science 
Foundation.}
\author{Jakub Pekárek}
{Department of Applied Mathematics, Charles University, Prague, Czechia}
{pekarej@iuuk.mff.cuni.cz}{https://orcid.org/0000-0002-5409-3930}{}
\authorrunning{V. Jelínek, M. Opler and J. Pekárek} 
\keywords{permutations, pattern matching, grid classes} 
\begin{document}
\maketitle

\begin{abstract}
\textsc{Permutation Pattern Matching (PPM)} is the problem of deciding for a given pair of 
permutations $\pi$ and $\tau$ whether the pattern $\pi$ is contained in the text $\tau$. Bose, Buss 
and Lubiw showed that PPM is NP-complete. In view of this result, it is natural to ask 
how the situation changes when we restrict the pattern $\pi$ to a fixed permutation class $\cC$;
this is known as the \textsc{$\cC$-Pattern PPM} problem. There have been several results in this 
direction, namely the work of Jelínek and Kynčl who completely resolved the hardness of 
\textsc{$\cC$-Pattern PPM} when $\cC$ is taken to be the class of $\sigma$-avoiding permutations 
for some~$\sigma$.

\emph{Grid classes} are special kind of permutation classes, consisting of permutations admitting a 
grid-like decomposition into simpler building blocks. Of particular interest are the so-called 
\emph{monotone grid classes}, in which each building block is a monotone sequence. Recently, it has 
been discovered that grid classes, especially the monotone ones, play a fundamental role in the 
understanding of the structure of general permutation classes. This motivates us to study the 
hardness of \textsc{$\cC$-Pattern PPM} for a (monotone) grid class~$\cC$.

We provide a complexity dichotomy for \textsc{$\cC$-Pattern PPM} when $\cC$ is taken to be a 
monotone grid class. Specifically, we show that the problem is polynomial-time solvable if a 
certain graph associated with $\cC$, called the cell graph, is a forest, and it is NP-complete 
otherwise. We further generalize our results to grid classes whose blocks belong to classes of 
bounded grid-width. We show that the \textsc{$\cC$-Pattern PPM} for such a grid class $\cC$ is 
polynomial-time solvable if the cell graph of $\cC$ avoids a cycle or a certain special type of 
path, and it is NP-complete otherwise.
\end{abstract}

\section{Introduction}
A \emph{permutation} is a sequence $\pi=\pi_1,\pi_2,\dotsc,\pi_n$ in which each number from the set 
$[n]=\{1,2,\dotsc,n\}$ appears exactly once. We then say that a permutation 
$\pi=\pi_1,\dotsc,\pi_n$ \emph{contains} a permutation $\sigma=\sigma_1,\dotsc,\sigma_k$, if $\pi$ 
has a subsequence of length $k$ whose elements have the same relative order as the elements 
of~$\sigma$ (see Section~\ref{sec:defs} for a more formal definition). If $\pi$ does not contain 
$\sigma$, we say that $\pi$ \emph{avoids} $\sigma$.

An essential algorithmic problem involving permutations is \textsc{Permutation Pattern Matching 
(PPM)}: Given a permutation $\pi$ (`pattern') of size $k$ and $\tau$ (`text') of size $n$, does 
$\tau$ contain $\pi$?
Bose, Buss and Lubiw~\cite{Bose1998} have shown that \PPM\ is NP-complete. This result has motivated 
a study of various variants of \PPM, in particular to obtain the best possible runtime dependence 
on $k$. Guillemot and Marx~\cite{Guillemot2014} provided the break-through result in this direction 
by establishing the fixed-parameter tractability of 
\PPM\ in terms of the pattern length with algorithm running in $2^{O(k^2 \log 
k)}\cdot n$ time. The first phase of their algorithm finds a suitable decomposition that relies on 
the proof of Stanley-Wilf conjecture given by Marcus and Tardos~\cite{Marcus2004}. Subsequently, 
Fox~\cite{Fox2013} refined the results by Marcus and Tardos and thereby reduced the complexity of 
the algorithm to $2^{O(k^2)} \cdot n$.


Several algorithms also exist whose runtimes depend on different parameters than the length of 
$\pi$. Bruner and Lackner~\cite{Bruner2016} described an algorithm for \PPM\ with run time
$O(1.79^{\text{run}(\tau)}\cdot kn)$ where $\text{run}(\tau)$ is the number of consecutive monotone 
sequences needed to obtain $\tau$ via concatenation. Ahal and Rabinovich~\cite{Ahal2000} designed 
an algorithm for \PPM\ that runs in time $n^{O(\tw(G_\pi))}$ where $G_\pi$ is a certain graph 
associated to the pattern $\pi$ and $\tw(G_\pi)$ denotes the treewidth of $G_\pi$. Later Jelínek, 
Opler and Valtr~\cite{Jelinek2018} introduced a related parameter $\gw(\pi)$, called the 
\emph{grid-width} of $\pi$, and showed that $\gw(\pi)$ is equivalent to $\tw(G_\pi)$ up to a 
constant and thereby implying an algorithm for \PPM\ running in time $O(n^{O(\gw(\pi))})$.

Another approach to tackling the hardness of \PPM\ is to restrict the choice of pattern to a 
particular permutation class $\cC$, where a \emph{permutation class} is a set of permutations $\cC$ 
such that for every $\sigma \in \cC$, every permutation contained in $\sigma$ belongs to $\cC$ as 
well.

\problemdef{$\cC$-Pattern Permutation Pattern Matching ($\cC$-Pattern PPM)}
{A pattern $\pi\in\cC$ of size $k$ and a permutation $\tau$ of size $n$.}
{Does $\tau$ contain $\pi$?}

For a permutation $\sigma$, we let $\Av(\sigma)$ denote the class of permutations 
avoiding~$\sigma$. 
Notice that \PPPM{$\Av(21)$} simply reduces to finding the longest increasing subsequence of 
$\tau$, which is a well-known problem and can be solved in time $O(n \log\log 
n)$~\cite{Makinen2001}. 
Bose, Buss and Lubiw~\cite{Bose1998} showed that \PPPM{$\cC$} can be solved in polynomial time if 
$\cC$ is the class of the so-called separable permutations. Further improvements 
were given by Ibarra~\cite{Ibarra1997}, Albert et al.~\cite{Albert2001} and by Yugandhar and 
Saxena~\cite{Yugandhar2005}.
Recently, Jelínek and Kynčl~\cite{Jelinek2017} completely resolved the hardness of \PPPM{$\cC$} for 
classes avoiding a single pattern. They proved that \PPPM{$\Av(\alpha)$} is polynomial-time 
solvable for $\alpha \in \{1,12,21,132,213,231,312\}$ and NP-complete otherwise. 

Lately, a new type of permutation classes has been gaining a lot of attention. The \emph{grid 
class} of a matrix $\cM$ whose entries are permutation classes, denoted by $\Grid(\cM)$, is a class 
of permutations admitting a grid-like decomposition into blocks that belong to the classes 
$\cM_{i,j}$. If moreover $\cM$ contains only $\Av(21), \Av(12)$ and $\emptyset$ we say that 
$\Grid(\cM)$ is a \emph{monotone grid class}. To each matrix $\cM$ we also associate a graph 
$G_\cM$, called the \emph{cell graph} of $\cM$. We postpone their full definitions to 
Section~\ref{sec:defs}.

Monotone grid classes were introduced partly by Atkinson, Murphy and Ru\v{s}kuc~\cite{Atkinson2002} 
and in full by Murphy and Vatter~\cite{Murphy2002} who showed that a monotone grid class 
$\Grid(\cM)$ is partially well-ordered if and only if $G_\cM$ is a forest. 
Brignall~\cite{Brignall2012} later extended their results to a large portion of general grid 
classes. General grid classes themselves were introduced by Vatter~\cite{Vatter2011} in his paper 
investigating the growth rates of permutation classes. Since then the grid classes
played a central role in most subsequent works on growth rates of permutation
classes~\cite{Albert2013,Vatter2019}. Bevan~\cite{Bevan2014,Bevan2015} tied 
the growth rates of grid classes to algebraic graph theory.

Given the prominent role of grid classes in recent developments of the permutation pattern 
research, it is only natural to investigate the hardness of searching patterns that belong to a 
grid class. Neou, Rizzi and Vialette~\cite{Neou2016} designed a polynomial-time algorithm solving 
\PPPM{$\cC$} when $\cC$ is the class of the so-called wedge permutations, which can be also 
described as a monotone grid class. Consequently, Neou~\cite{Neou2017} asks at the end of his 
thesis about the hardness of \PPPM{$\Grid(\cM)$} for a monotone grid class $\Grid(\cM)$.

\subparagraph*{Our results.} 
In Section~\ref{sec:monotone}, we answer the question of Neou by proving that for a monotone grid 
class $\Grid(\cM)$, the problem \PPPM{$\Grid(\cM)$} is polynomial-time solvable  if the cell graph 
$G_\cM$ is a forest, NP-complete otherwise.

In Section~\ref{sec:general}, we further extend our results to matrices whose every entry is a 
permutation class of bounded grid-width. We prove that for such grid classes, \PPPM{$\Grid(\cM)$} 
is polynomial-time solvable if $G_\cM$ is a forest which does not contain a certain type of path, 
and NP-complete otherwise.

\section{Preliminaries}\label{sec:defs}
A \emph{permutation of length $n$} is a sequence in which each element of the set $[n] = \lbrace 1, 
2, \dots, n\rbrace$ appears exactly once.  When writing out short permutations explicitly, we shall 
omit all punctuation and write, e.g., $15342$ for the permutation $1,5,3,4,2$. The 
\emph{permutation diagram} of $\pi$ is the set of points $\{(i,\pi_i);\;i\in[n]\}$ in the plane. 
Note that we use Cartesian coordinates, that is, the first row of the diagram is at the bottom. We 
blur the distinction between permutations and their permutation diagrams, e.g., we shall refer to 
`the point set $\pi$' rather than `the point set of the diagram of the permutation~$\pi$'.

For a point $p$ in the plane, we denote its first coordinate as $p.x$, and its second coordinate as 
$p.y$. A subset $S$ of a permutation diagram is \emph{isomorphic} to a subset $R$ of a permutation 
diagram if there is a bijection $f\colon R \to S$ such that for any pair of points $p \neq q$ of 
$R$ we 
have $f(p).x < f(q).x$ if and only if $p.x < q.x$, and $f(p).y < f(p).y$ if and only if $p.y < 
q.y$. A permutation $\pi$ of length $n$ \emph{contains} a permutation $\sigma$ of length $k$, if 
there is a subset of $\pi$ isomorphic to the permutation diagram of $\sigma$. Such a subset is then 
an \emph{occurrence} (or a \emph{copy}) of $\sigma$ in~$\pi$. If $\pi$ does not contain $\sigma$, 
we say that $\pi$ \emph{avoids}~$\sigma$.

A \emph{permutation class} is a set $\cC$ of permutations with the property that if $\pi$ is in 
$\cC$, then all the permutations contained in $\pi$ are in $\cC$ as well. For a permutation 
$\sigma$, we let $\Av(\sigma)$ denote the class of $\sigma$-avoiding permutations. We shall 
sometimes use the symbols $\Inc$ and $\Dec$  as short-hands 
for the class of increasing permutations $\Av(21)$ and the class of decreasing permutations 
$\Av(12)$.

For a permutation $\pi$ of length $n$ the \emph{reverse} of $\pi$ is the permutation 
$\pi_n,\pi_{n-1},\dotsc,\pi_1$, the \emph{complement} of $\pi$ is the permutation 
$n+1-\pi_1,n+1-\pi_2,\dotsc,n+1-\pi_n$, and the \emph{inverse} of $\pi$ is the permutation 
$\sigma=\sigma_1,\dotsc,\sigma_n$ satisfying $\pi_i=j\iff\sigma_j=i$. We let $\pi^r$, $\pi^c$ and 
$\pi^{-1}$ denote the reverse, complement and inverse of $\pi$, respectively. For a 
permutation class $\cC$, we let $\cC^r$ denote the set $\{\pi^r;\; \pi\in\cC\}$, and similarly for 
$\cC^c$ and $\cC^{-1}$. Note that $\cC^r$, $\cC^c$ and $\cC^{-1}$ are again permutation classes.

A permutation $\pi$ of length $n$ is a \emph{horizontal alternation} if all the even 
entries of $\pi$ precede all the odd entries of $\pi$, i.e., there are no indices $i < j$ such that 
$\pi_i$ is odd and $\pi_j$ is even. A permutation $\pi$ is a \emph{vertical alternation} if 
$\pi^{-1}$ is a horizontal alternation.

\subparagraph*{Griddings and grid classes.} For two sets $A$ and $B$ of numbers, we write $A<B$ if 
every element of $A$ is smaller than any element 
of~$B$. In particular, both $\emptyset<A$ and $A<\emptyset$ holds for any~$A$.

A $k \times \ell$ matrix $\cM$ whose entries are permutation classes is called a \emph{gridding 
matrix}. Moreover, if the entries of $\cM$ belong to the set $\{\Inc,\Dec,\emptyset\}$ then we say 
that $\cM$ is a \emph{monotone gridding matrix}. A \emph{$k \times \ell$-gridding} of a permutation 
$\pi$ of length $n$ are two sequences of (possibly empty) disjoint integer intervals 
$I_1<I_2<\dotsb<I_k$ and $J_1<J_2<\dotsb<J_{\ell}$ such that  both $\bigcup_{i=1}^k I_i$ and 
$\bigcup_{j=1}^\ell J_j$ are equal to $[n]$. We call the set $I_i\times J_j$ an \emph{$(i,j)$-cell} 
of $\pi$. An \emph{$\cM$-gridding} of a permutation $\pi$ is a $k \times 
\ell$-gridding such that the restriction of $\pi$ to the $(i,j)$-cell is isomorphic to 
a permutation from the class~$\cM_{i,j}$. If $\pi$ possesses an $\cM$-gridding, then $\pi$ is said 
to 
be \emph{$\cM$-griddable} and $\pi$ equipped with a fixed $\cM$-gridding is called an 
\emph{$\cM$-gridded permutation}. We let $\Grid(\cM)$ be the class of $\cM$-griddable 
permutations. Note that for consistency with our Cartesian numbering convention, we number the rows 
of a matrix from bottom to top.

The \emph{cell graph} of the gridding matrix $\cM$, denoted $G_\cM$, is the graph whose vertices 
are the cells of $\cM$ that contain an infinite class, with two vertices being adjacent if they 
share a row or a column of $\cM$ and all cells between them are finite or empty. A \emph{proper 
turning path} in $G_\cM$ is a path $P$ such that no three consecutive cells of $P$ share the same 
row or column. See Figure~\ref{fig:grid-class}.

\begin{figure}
\centering
\raisebox{-0.5\height}{\includegraphics[width=0.45\textwidth]{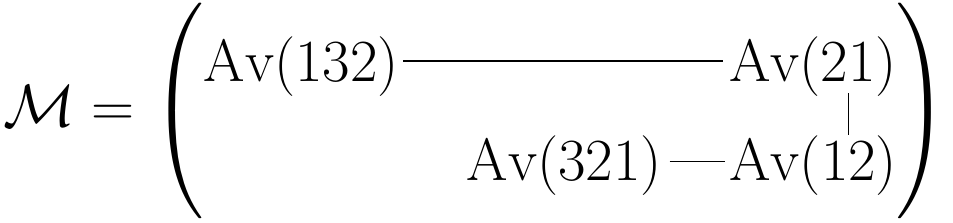}}
\hspace{0.5in}
\raisebox{-0.5\height}{\includegraphics[scale=0.45]{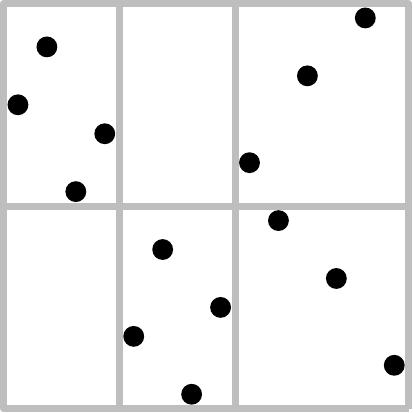}}
\hspace{0.5in}
\caption{A gridding matrix $\cM$ on the left and an $\cM$-gridded permutation on the right. Empty 
entries of $\cM$ are omitted and the edges of $G_\cM$ are displayed inside $\cM$.}
\label{fig:grid-class}
\end{figure}

\subparagraph*{Grid-width.} An \emph{interval family $\cI$} is a set of pairwise disjoint integer 
intervals. The 
\emph{intervalicity} of a set $A \subseteq [n]$, denoted by $\intv(A)$, is the size of the smallest 
interval family whose 
union is equal to~$A$. For a point set $S$ in the plane, let $\Pi_x(S)$ denote its projection on 
the $x$-axis and similarly $\Pi_y(S)$ its projection on the $y$-axis. We write $\intx(S)$ and 
$\inty(S)$ as short for $\intv(\Pi_x(S))$ and $\intv(\Pi_y(S))$, respectively. For a subset $S$ of 
the 
permutation diagram, the \emph{grid-complexity} of $S$ is the maximum of $\intx(S)$ and 
$\inty(S)$.

A \emph{grid tree} of a permutation~$\pi$ of length $n$ is a rooted binary tree $T$ with $n$ 
leaves, each leaf being labeled by a distinct point of the permutation diagram.  Let $\pi_v^T$ 
denote the point set of the labels on the leaves in the subtree of $T$ rooted in~$v$. The 
\emph{grid-width} of a vertex $v$ in $T$ is the grid-complexity of $\pi_v^T$, and the 
\emph{grid-width} of $T$, denoted by $\gw^T(\pi)$, is the maximum grid-width of a vertex of $T$. 
Finally, the \emph{grid-width} of a permutation $\pi$, denoted by $\gw(\pi)$, is the minimum of 
$\gw^T(\pi)$ over all grid trees~$T$ of~$\pi$.

We also consider a linear version of this parameter. We say that a rooted binary tree $T$ is a 
\emph{caterpillar} if each vertex is either a leaf or has at least one leaf as a 
child. The \emph{path-width} of a permutation $\pi$, denoted by $\pw(\pi)$, is the minimum 
of $\gw^T(\pi)$ over all caterpillar grid trees~$T$ of ~$\pi$.

We now provide a useful alternative definition of path-width. For permutations $\pi$ and $\sigma$ 
of length $n$, the \emph{path-width of $\pi$ in $\sigma$-ordering}, denoted by $\pw^\sigma(\pi)$ is 
the maximum grid-complexity attained by a set $\left\lbrace(\sigma_1, \pi_{\sigma_1}), \ldots, 
(\sigma_i, \pi_{\sigma_i})\right\rbrace$ for some $i \in [n]$.

\begin{lemma}
\label{lem:pathwidth-ordering}
A permutation $\pi$ of length $n$ has path-width $p$ if and only if the minimum value of 
$\pw^\sigma(\pi)$ over all permutations $\sigma$ of length $n$ is exactly $p$.
\end{lemma}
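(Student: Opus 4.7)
The plan is to establish a value-preserving correspondence between caterpillar grid trees of $\pi$ and permutations $\sigma$ of length $n$. Specifically, I will show how to read off from any caterpillar grid tree $T$ of $\pi$ a permutation $\sigma$ satisfying $\pw^\sigma(\pi)=\gw^T(\pi)$, and conversely how to assemble such a caterpillar $T$ from any $\sigma$ with the same equality. The lemma then follows immediately by taking the minimum over both sides.

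The structural key is that a caterpillar $T$ with $n$ leaves has a unique \emph{spine} of internal vertices $v_1, v_2, \ldots, v_{n-1}$, rooted at $v_1$, in which each $v_j$ with $j<n-1$ has one leaf child $\ell_j$ and one internal child $v_{j+1}$, while $v_{n-1}$ has two leaf children, which I label $\ell_{n-1}$ and $\ell_n$. The subtree rooted at $v_j$ therefore contains exactly the leaves $\ell_j,\ell_{j+1},\ldots,\ell_n$, so the chain of label sets $\pi^T_{v_1}\supsetneq\pi^T_{v_2}\supsetneq\cdots\supsetneq\pi^T_{v_{n-1}}$ is obtained by peeling off one label at a time from the top. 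Defining $\sigma$ by $\sigma_i=\ell_{n+1-i}.x$, the prefix $S_i=\{(\sigma_j,\pi_{\sigma_j}):j\le i\}$ coincides with $\pi^T_{v_{n+1-i}}$ for every $i\ge 2$, while $S_1$ is a single point of grid-complexity~$1$. Since each non-leaf vertex of $T$ is some $v_j$ and contributes the grid-complexity of $S_{n+1-j}$, while every leaf of $T$ has grid-complexity $1$, the maxima defining $\gw^T(\pi)$ and $\pw^\sigma(\pi)$ coincide.

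For the opposite direction I would invert the same dictionary: given $\sigma$, I set $\ell_{n+1-i}=(\sigma_i,\pi_{\sigma_i})$ and build the caterpillar by creating a spine $v_1,\ldots,v_{n-1}$ and attaching each $\ell_j$ to $v_j$ exactly as above. The identity $\pi^T_{v_j}=S_{n+1-j}$ persists, so again $\gw^T(\pi)=\pw^\sigma(\pi)$. Combining the two directions yields $\pw(\pi)=\min_T\gw^T(\pi)=\min_\sigma\pw^\sigma(\pi)$, which is exactly the claim. I do not anticipate a serious obstacle; the main points requiring care are the index offset between the spine position and the prefix length, and the trivial boundary case $n\le 2$, where the caterpillar degenerates and both quantities agree by inspection.
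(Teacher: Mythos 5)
Your proposal is correct and follows essentially the same strategy as the paper's proof: building an explicit value-preserving bijection between caterpillar grid trees and orderings $\sigma$, with prefixes of $\sigma$ corresponding to the label sets of spine subtrees. The only difference is cosmetic — you make the spine structure and the index offset explicit, whereas the paper phrases the forward direction informally as "order the labels by depth"; both yield the same argument.
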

\begin{proof}
Suppose that $\pw(\pi) = p$ as witnessed by a caterpillar grid tree~$T$. Observe that all leaves 
of $T$ except for the deepest pair lie in different depths. Define $\sigma$ to simply order the 
labels by the depths of their leaves, defining arbitrarily the order of the two deepest leaves. 
Then every set $\lbrace(\sigma_1, \pi_{\sigma_1}), \ldots, (\sigma_i, \pi_{\sigma_i})\rbrace$ 
corresponds exactly to the set $\pi_v^T$ for some vertex $v$ of $T$.
	
In order to prove the other direction, we define a sequence of caterpillar trees $T_1, \ldots, T_n$ 
in the following way. Let $T_1$ be a single vertex labeled by $(\sigma_1, \pi_{\sigma_1})$. For 
$i>1$, let $T_i$ be the binary rooted tree with left child a leaf labeled by 
$(\sigma_i, \pi_{\sigma_i})$ and right child the tree $T_{i-1}$. The tree $T_n$ is a caterpillar 
grid tree of $\pi$ and for every inner vertex $v$ the set $\pi_v^T$ is equal to $\lbrace(\sigma_1, 
\pi_{\sigma_1}), \ldots,(\sigma_i, \pi_{\sigma_i})\rbrace$ for some $i$. The claim follows.
\end{proof}

Ahal and Rabinovich~\cite{Ahal2000} designed an algorithm for \PPM\ that runs in time 
$n^{O(\tw(G_\pi))}$ where $G_\pi$ is a certain graph associated to the pattern $\pi$ and 
$\tw(G_\pi)$ denotes the treewidth of $G_\pi$. The following theorem follows by combining this 
algorithm with the result of Jelínek et al.~\cite{Jelinek2018} who showed that up to a constant, 
$\gw(\pi)$ is equivalent to $\tw(G_\pi)$.

\begin{theorem}[Ahal and Rabinovich~\cite{Ahal2000}, Jelínek et al.~\cite{Jelinek2018}]
\label{thm:bounded-gw-algo}
Let $\pi$ be a permutation of length $k$ and $\tau$ a permutation of length $n$. The problem
whether $\tau$ contains $\pi$ can be solved in time $n^{O(\gw(\pi))}$.
\end{theorem}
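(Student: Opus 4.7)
The plan is to derive the theorem by composing two already known results: the pattern-matching algorithm of Ahal and Rabinovich, whose running time is parameterised by $\tw(G_\pi)$, together with the equivalence between $\tw(G_\pi)$ and $\gw(\pi)$ established by Jelínek et al. Thus there is essentially no new combinatorial or algorithmic content to prove; the task is to check that the two bounds compose cleanly.

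First I would recall the graph $G_\pi$ from Ahal and Rabinovich, whose vertex set is the point set of $\pi$ and whose edges join points consecutive in the $x$-order or in the $y$-order. Their algorithm computes a tree decomposition of $G_\pi$ of width $w = O(\tw(G_\pi))$ (using any standard constant-factor treewidth approximation, whose running time is dominated by the bound we aim for) and then performs a bottom-up dynamic programming over the decomposition. For each bag $B$ one enumerates all mappings $\varphi \colon B \to [n]^2$ sending points of $B$ to points of $\tau$ in an order-preserving way, and records whether $\varphi$ can be extended consistently to the points already processed. Each bag has at most $w+1$ vertices, so there are at most $n^{w+1}$ candidate mappings per bag, and the merge across adjacent bags runs in polynomial time per pair of compatible states. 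This yields the overall running time $n^{O(\tw(G_\pi))}$ claimed by Ahal and Rabinovich.

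To turn the exponent from $\tw(G_\pi)$ into $\gw(\pi)$, I would invoke the result of Jelínek et al., which asserts $\tw(G_\pi) \leq c \cdot \gw(\pi)$ for some absolute constant $c$. Plugging this inequality into the above running time gives $n^{O(c\cdot\gw(\pi))} = n^{O(\gw(\pi))}$, as required. The only point requiring care is that the $O(\cdot)$ hidden in the exponent must absorb the constant $c$ coming from the equivalence of $\gw$ and $\tw$; this is immediate because the equivalence of Jelínek et al. is precisely an asymptotic one, and the preprocessing cost of computing a tree decomposition of the desired width is bounded by $2^{O(\tw(G_\pi))}\cdot k$, which is subsumed by the main term since $\gw(\pi) \le k$.
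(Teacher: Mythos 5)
Your proposal is correct and takes essentially the same route as the paper: the paper states the theorem as an immediate combination of Ahal--Rabinovich's $n^{O(\tw(G_\pi))}$ algorithm with Jel\'inek et al.'s linear equivalence of $\tw(G_\pi)$ and $\gw(\pi)$, which is exactly what you do. Your extra unpacking of the dynamic program over the tree decomposition and the remark on the preprocessing cost are more detail than the paper gives but are consistent with its argument.
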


Importantly, Theorem~\ref{thm:bounded-gw-algo} implies that \PPPM{$\cC$} is decidable in 
polynomial time whenever the class $\cC$ has bounded grid-width. In fact, we obtain all the 
polynomial-time solvable cases of \PPPM{$\cC$} in this paper via showing that $\cC$ has bounded 
grid-width.

\section{Monotone grid classes}
\label{sec:monotone}

This section is dedicated to proving that the complexity 
of \PPPM{$\Grid(\cM)$} is for a monotone gridding matrix $\cM$ determined by whether $G_\cM$ 
contains a cycle.

\begin{theorem}
\label{thm:monotone-grid}
For a monotone gridding matrix $\cM$ one of the following holds:
\begin{itemize}
\item Either $G_\cM$ is a forest, $\Grid(\cM)$ has bounded path-width and \PPPM{$\Grid(\cM)$}
can be decided in polynomial time, or
\item $G_\cM$ contains a cycle, $\Grid(\cM)$ has unbounded grid-width and \PPPM{$\Grid(\cM)$}
is NP-complete.
\end{itemize}
\end{theorem}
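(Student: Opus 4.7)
The theorem is an equivalence, and the two directions require quite different techniques, so I would handle them in turn.

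For the algorithmic direction, the plan is to prove the structural statement that $\Grid(\cM)$ has bounded path-width whenever $G_\cM$ is a forest; the polynomial-time algorithm then follows at once from Theorem~\ref{thm:bounded-gw-algo} together with the obvious inequality $\gw(\pi)\le\pw(\pi)$. To bound $\pw(\pi)$ for a gridded permutation $\pi\in\Grid(\cM)$, I would use Lemma~\ref{lem:pathwidth-ordering} and exhibit a position ordering $\sigma$ whose every prefix has bounded grid-complexity. The ordering is produced by rooting each tree of $G_\cM$ at an arbitrary leaf and traversing it in a DFS-like fashion. When the DFS enters a cell $C$, its points are revealed in the order dictated by the incoming edge --- in $x$-order if the edge lies within a row, in $y$-order if within a column --- which is well-defined because a monotone cell orders its points the same way (up to sign) in both coordinates. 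The key invariant is that at every step the already-revealed points come from cells that are either fully processed or lie on the current DFS path --- a set whose size is bounded by $|V(G_\cM)|$, a constant depending only on $\cM$. A careful bookkeeping of how the $x$- and $y$-projection interval families change when the DFS crosses an edge shows that $\intx$ and $\inty$ of the revealed prefix stay bounded by a constant in $\cM$.

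For the hardness direction, I would proceed in two steps. First, to establish unbounded grid-width, one observes that a cycle in $G_\cM$ provides a closed chain of cells linked by shared rows and columns; by filling the cells of the cycle with coordinated long monotone sequences, one embeds into $\Grid(\cM)$ arbitrarily large grid-like permutations whose grid-width grows with their size. This rules out a direct application of Theorem~\ref{thm:bounded-gw-algo}. Second, and more substantively, I would prove NP-hardness of \PPPM{$\Grid(\cM)$} by reducing from a known NP-hard problem, such as 3-SAT via Bose--Buss--Lubiw-style gadgets or general \PPM\ on grid-like patterns. The cycle of cells effectively supplies a ``crossing'' between horizontal and vertical coordinates, which is exactly what is needed to simulate arbitrary interactions within an otherwise rigid monotone grid class. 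A natural organization is to first establish hardness in the base case of a single short cycle, and then handle longer cycles by extending the same gadgets through the additional cells.

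The main obstacle will be the hardness reduction: the rigidity of monotone cells forces every gadget to be realized as a monotone staircase inside its cell, while the cycle must be used to synchronize these gadgets across cells without breaking monotonicity. Designing variable and clause gadgets which fit monotonically into their cells, communicate correctly through the cycle, and admit a match in the text exactly when the encoded instance is satisfiable will require careful combinatorial engineering. The path-width argument on the algorithmic side is conceptually cleaner via Lemma~\ref{lem:pathwidth-ordering}, but it still demands a precise invariant and interval-bookkeeping along the DFS to guarantee bounded projection intervalicities.
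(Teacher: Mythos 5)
Your plan for the two directions is sound in outline, but the DFS-based argument for the forest case has a genuine gap. Two cells in the same column of the gridding share the same interval $I_i$ of $x$-coordinates, and their points are interleaved inside $I_i$. Consequently, the $x$-projection of a single cell's points is \emph{not} an interval; it may already have intervalicity proportional to the cell's size. So if your DFS reveals a cell entirely before moving to a column-neighbour, the prefix you have built already has unbounded grid-complexity --- this fails even for a two-cell column. The paper's Proposition~\ref{prop:small-pw} avoids this by building the ordering \emph{one point at a time}: at each step it removes a point that is simultaneously the extremal remaining point of its gridding column and of its gridding row, so the \emph{remaining} set always has one interval per column in its $x$-projection and one interval per row in its $y$-projection. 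Two things make this possible and both are missing from your sketch: (i) a \emph{consistent orientation} of the columns and rows (obtained after refining $\cM$ to $\cM^{\times 2}$ via Lemma~\ref{lem-albert}, which does not change the class when $G_\cM$ is a forest), which guarantees that in any cell the column-extremal and row-extremal points coincide; and (ii) an orientation of the forest $G_\cM$ towards the cells currently containing extremal points, whose sinks supply the next point to remove. The bound $\pw(\pi)\le\max(k,\ell)$ then follows via Lemma~\ref{lem:pathwidth-ordering}.

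On the hardness side, your instinct to reduce from 3-SAT is right, but building monotone gadgets from scratch inside an arbitrary cyclic grid class is exactly the obstacle the paper sidesteps. It imports Theorem~\ref{thm:jelinek-hard} as a black box: 3-SAT is already encoded there as a grid-preserving containment question between two \emph{staircase}-gridded permutations. What remains is purely structural: Lemma~\ref{lem:long-path} unrolls a cycle in $G_\cM$ into arbitrarily long proper turning paths, a signed ``$(f,g)$-transform'' of rows and columns carries the staircase instance onto such a path inside $\Grid(\cM)$, and two ``anchor'' monotone runs flanking the initial cell turn grid-preserving containment into ordinary containment, giving NP-hardness of \PPPM{$\Grid(\cM)$}. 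Unbounded grid-width is likewise proved via the same unrolling rather than directly from the cycle: Lemma~\ref{lem:long-path-width} shows a path of length $k$ in the cell graph forces a permutation of grid-width at least $k/4$ (by a block-density argument), and Lemma~\ref{lem:long-path} supplies paths of every length. Your from-scratch reduction might be made to work, but the real technical content in the paper's hardness direction is the unrolling plus the $(f,g)$-transform and anchors, not fresh gadget design.
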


A \emph{consistent orientation} of a $k\times \ell$ monotone gridding matrix $\cM$ is a pair of 
functions $(c,r)$ such that $c\colon [k] \to \{-1,1\}$, $r\colon [\ell] \to \{-1,1\}$ and for every 
$i \in 
[k]$, $j \in [\ell]$ the value $c(i)r(j)$ is positive if $\cM_{i,j} = \;\Inc$ and negative if 
$\cM_{i,j} = \;\Dec$. 

Intuitively speaking, the purpose of the consistent orientation is to 
`orient' each nonempty cell of a monotone gridding matrix, so that $\Inc$-cells are oriented 
towards the top-right or towards the bottom-left, while the $\Dec$-cells are oriented towards the 
top-left or the bottom-right. The orientation is consistent in the sense that in a given column $i$, 
either all the nonempty cells are oriented left-to-right (if $c(i)=1$) or right-to-left (if 
$c(i)=-1$), while in a row $j$, they are all oriented bottom-to-top (if $r(j)=1$) or top-to-bottom 
(if $r(j)=-1$). See Figure~\ref{fig:path-creation} (left) for an example of a gridding matrix with 
a consistent orientation.

Not every monotone gridding matrix has a consistent orientation: consider, e.g., a $2\times 2$ 
gridding matrix with three cells equal to $\Inc$ and one cell equal to~$\Dec$. However,
Albert et al.~\cite{Albert2013} observed that we can transform a given gridding matrix
$\cM$ into another, similar gridding matrix that has a consistent orientation.
Let $\cM$ be a $k \times \ell$ monotone gridding matrix and $q$ a positive integer. The 
\emph{refinement $\cM^{\times q}$} of $\cM$ is the $qk \times q\ell$ matrix obtained from $\cM$ by 
replacing each $\Inc\,$-entry by a $q\times q$ diagonal matrix with all the non-empty entries equal 
to $\Inc$, each $\Dec\,$-entry by a $q\times q$ anti-diagonal matrix with all the non-empty entries 
equal to $\Dec$ and each empty entry by a $q \times q$ empty matrix. It is easy to see that 
$\Grid(\cM^{\times q})$ is a subclass of $\Grid(\cM)$. Moreover, if $G_\cM$ is a forest then 
$\Grid(\cM^{\times q}) = \Grid(\cM)$ and $G_{\cM^{\times q}}$ is a forest as well.

\begin{lemma}[Albert et al.~\protect{\cite[Proposition 4.1]{Albert2013}}]\label{lem-albert}
For every monotone gridding matrix $\cM$, the refinement $\cM^{\times2}$ admits a
consistent orientation.
\end{lemma}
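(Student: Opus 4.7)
The plan is to exhibit an explicit consistent orientation of $\cM^{\times 2}$ that does not depend on the structure of $\cM$ at all, but only on the parity of the refined row and column indices. The key observation is the following: by construction of the refinement, each $\Inc$-entry $\cM_{i,j}$ produces two $\Inc$-cells at refined positions $((i,1),(j,1))$ and $((i,2),(j,2))$, and each $\Dec$-entry produces two $\Dec$-cells at $((i,1),(j,2))$ and $((i,2),(j,1))$. Indexing the refined columns as $2(i-1)+a$ and refined rows as $2(j-1)+b$ with $a,b\in\{1,2\}$, we see that every nonempty $\Inc$-cell of $\cM^{\times 2}$ sits in a refined column and row of matching parity, while every nonempty $\Dec$-cell sits in refined column and row of opposite parity.

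With this in hand, define $c\colon [2k]\to\{-1,+1\}$ by $c(i') = (-1)^{i'+1}$, i.e.\ $+1$ on odd refined columns and $-1$ on even ones, and symmetrically $r\colon [2\ell]\to\{-1,+1\}$ by $r(j') = (-1)^{j'+1}$. Then $c(i')\cdot r(j') = +1$ exactly when $i'$ and $j'$ have the same parity, and $-1$ otherwise. Combining this with the parity observation above, the product is $+1$ on every nonempty $\Inc$-cell of $\cM^{\times 2}$ and $-1$ on every nonempty $\Dec$-cell, which is exactly the condition in the definition of a consistent orientation. Empty cells impose no constraint.

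There is no real obstacle once the parity observation has been made; the verification is a one-line check. The only thing one has to be careful about is matching the Cartesian convention used throughout the paper (rows numbered from bottom to top) with the words \emph{diagonal} and \emph{anti-diagonal} in the definition of $\cM^{\times 2}$, so that the nonempty refined cells really lie in the positions claimed above. After settling this bookkeeping, the construction works uniformly for every monotone gridding matrix $\cM$, regardless of whether $G_\cM$ is a forest or contains cycles, which is the whole point of passing to a refinement.
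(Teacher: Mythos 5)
Your proof is correct. The paper does not actually prove Lemma~\ref{lem-albert}: it is imported verbatim as a citation of Albert et al.\ (Proposition 4.1 in \cite{Albert2013}), so there is no in-paper argument to compare against. Your explicit parity-based orientation ($c(i')=(-1)^{i'+1}$ on refined columns, $r(j')=(-1)^{j'+1}$ on refined rows) is the clean, self-contained way to see why $q=2$ suffices: the diagonal/anti-diagonal placement in the refinement ensures every nonempty $\Inc$-cell lands at a matching-parity position and every nonempty $\Dec$-cell at an opposite-parity position, so the checkerboard sign pattern works uniformly regardless of $\cM$. Your remark about reconciling the paper's bottom-to-top row convention with the words ``diagonal'' and ``anti-diagonal'' is the right (and only) subtlety; once that bookkeeping is settled as you describe, the verification is immediate. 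As a supplement to the citation, an explicit one-line proof like yours would in fact be a welcome addition to the paper, since Albert et al.\ work in a slightly different setting (as the authors themselves remark after the lemma).
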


We remark that Albert et al.~\cite{Albert2013} use a slightly different way of defining a permutation 
class from a given gridding matrix: specifically, their classes only contain permutations in which 
the entries represented by each cell of the gridding can be placed on a segment with slope $+1$ 
or~$-1$. However, as Lemma~\ref{lem-albert} is a claim about gridding matrices and not about 
permutation classes, we can use it here for our purposes.

Now we provide bounds on the width parameters of monotone grid classes depending on the 
structure of their cell graphs.

\begin{proposition}
\label{prop:small-pw}
Let $\cM$ be a $k \times \ell$ monotone gridding matrix that has a consistent orientation. If 
$G_\cM$ 
is a forest then every permutation of $\Grid(\cM)$ has path-width at most $\max(k,\ell)$.
\end{proposition}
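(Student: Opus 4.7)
The plan is to construct, for any $\pi \in \Grid(\cM)$ with its fixed $\cM$-gridding, a linear order $\sigma$ of its points whose every prefix has grid-complexity at most $\max(k,\ell)$; by Lemma~\ref{lem:pathwidth-ordering} this yields the desired bound on $\pw(\pi)$.

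Let $(c,r)$ denote the consistent orientation of $\cM$. I define a partial order $\prec$ on the points of $\pi$ as follows: for two points $p, q$ lying in the same column~$i$, set $p \prec q$ whenever $c(i)\cdot p.x < c(i)\cdot q.x$; and for two points in the same row~$j$, set $p \prec q$ whenever $r(j)\cdot p.y < r(j)\cdot q.y$. The defining property of a consistent orientation guarantees that for $p,q$ sharing a cell the two criteria agree, so $\prec$ is well-defined. Let $\sigma$ be any linear extension of $\prec$.

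Granting such a linear extension, the grid-complexity bound is easy to verify. Because $\sigma$ respects the column order, the points of every column~$i$ appear in $\sigma$ in the direction prescribed by $c(i)$; consequently, at every prefix of $\sigma$, the processed $x$-coordinates lying in $I_i$ form either a prefix or a suffix of $I_i$, and in particular a single interval. Summing over the $k$ columns yields $\intx \le k$, and a symmetric argument for rows gives $\inty \le \ell$, so the grid-complexity of every prefix is at most $\max(k,\ell)$.

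The heart of the proof is therefore showing that $\prec$ is acyclic when $G_\cM$ is a forest. Consider a hypothetical cycle $p_1 \prec p_2 \prec \cdots \prec p_m \prec p_1$ of minimum length. Two consecutive relations of the same type (both column or both row) could be combined by transitivity into a shorter cycle, so the relations must strictly alternate between column- and row-type; in particular $m$ is even and the cells $C_1, \ldots, C_m$ containing $p_1, \ldots, p_m$ trace an alternating cycle of row- and column-shares. I plan to deduce from this a cycle in $G_\cM$ itself. Whenever two consecutive cells $C_i, C_{i+1}$ of this sequence are not directly adjacent in $G_\cM$, their shared row or column must contain an infinite cell strictly between them; this cell is itself a vertex of $G_\cM$ and splits the missing adjacency into a path. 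Replacing every such non-adjacent pair by the corresponding path in $G_\cM$ turns the sequence into a non-backtracking closed walk in $G_\cM$, which must contain a cycle, contradicting that $G_\cM$ is a forest. The main technical delicacy is verifying that this rerouting never collapses the walk to a trivial one.
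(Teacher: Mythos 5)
Your proof is correct, and it takes a genuinely different route from the paper's. The paper constructs the ordering greedily, one point at a time: at each step it builds an auxiliary directed graph $\vec G_\cM$ by orienting every column and row of $G_\cM$ towards the cell that currently holds its extremal point, and then uses the fact that an oriented forest has a sink to locate a point that is simultaneously extremal in its column and its row. Removing that point keeps the projections onto each column and row as single intervals, and iterating produces the desired caterpillar grid tree. You instead define a single global relation $\prec$ on the points of $\pi$, prove it acyclic, and take any linear extension. Both arguments hinge on the same observation that the consistent orientation makes the column order and row order agree inside each cell, but the way the forest hypothesis enters is different: the paper re-orients $G_\cM$ at every step and uses the sink lemma, whereas you use the forest hypothesis exactly once, globally, to show $\prec$ has no directed cycle. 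Your approach is arguably more conceptual and makes the role of forestness more transparent.

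The technical delicacy you flag does resolve cleanly. Minimality of a hypothetical cycle in $\prec$ forces strict alternation between row-type and column-type relations, which in turn forces the visited cells to be pairwise distinct (any coincidence would allow a transitivity shortcut, and two points in the same cell are comparable by both criteria, which agree). The resulting alternating closed walk of cells lifts to a closed walk in $G_\cM$ after inserting the intermediate nonempty cells along each shared row or column; this walk is non-backtracking because within a segment the steps are monotone, and at each junction $C_{i+1}$ the incoming edge lies in the shared column while the outgoing edge lies in the shared row (or vice versa), so they cannot be reverses of each other. Since the walk has positive length, $G_\cM$ must contain a cycle, giving the contradiction. So the sketch is complete in all essential respects.
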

\begin{proof}
Let $\pi$ be a permutation from $\Grid(\cM)$ of length $n$ together with an $\cM$-gridding $I^0_1 < 
\cdots < I^0_k$ and  $J^0_1 < \cdots < J^0_\ell$. We fix a consistent orientation $(c,r)$ for the 
matrix $\cM$.

For $I \subseteq I^0_i$, the \emph{extremal point} of $I$ is the rightmost point of $\pi$ 
restricted to $I\times [n]$ if $c(i)$ is positive, the leftmost point otherwise. Similarly for $J 
\subseteq J^0_j$, the extremal point of $J$ is the topmost point of $\pi$ restricted to 
$[n]\times J$ if $r(j)$ is positive, the bottommost point otherwise. Importantly, the definition of 
consistent orientation guarantees that if $I \times J$ contains the extremal points of both 
$I$ and $J$, then these two points must actually be the same point.

We now construct an ordering $\sigma$ of length $n$ and a sequence of interval families 
 $(\cI^m, \cJ^m)$ for $m \in [n]$ such that for every $m$
\begin{itemize}
\item $\cI^m$ contains $k$ (possibly empty) intervals $I_1^m < \cdots < I_k^m$ and $\cJ^m$ contains 
$\ell$ (possibly empty) intervals $J_1^m < \cdots < J_\ell^m$,
\item $I^m_s \subseteq I^0_s$ and $J^m_t \subseteq J^0_t$ for every $s$ and $t$, and
\item for $P = \{(\sigma_{m+1},
\pi_{\sigma_{m+1}}), (\sigma_{m+2}, \pi_{\sigma_{m+2}}),\allowbreak\dots,\allowbreak(\sigma_n, 
\pi_{\sigma_n})\}$ we have $\Pi_x(P) = \bigcup\cI^m$ and $\Pi_y(P) = \bigcup\cJ^m$.
\end{itemize}
The third condition then implies that $\pw^{\sigma^R}(\pi) \leq \max(k,\ell)$ which proves 
the proposition.

\newcommand{\ogm}{${\vec G}_\cM$}
Suppose that we have already defined the sequences up to $m$. 
Our goal is to find indices $i\in[k]$ and $j\in[\ell]$ such that $I^m_i\times J_j^m$ contains the 
extremal point of both $I^m_i$ and~$J^m_j$. To this end, we transform $G_\cM$ into a directed 
graph \ogm\ as follows: suppose that a column $i$ of $\cM$ contains nonempty cells 
$c_1,c_2,\dotsc,c_p$ ordered bottom to top. These cells form a path in~$G_\cM$. Assume that the 
extremal point of $I_i^m$ is in the cell~$c_j$. We then orient the edges of the path $c_1c_2\dotsc 
c_p$ so that they all point towards~$c_j$, that is, for every $a<j$ the edge $\{c_a,c_{a+1}\}$ is 
oriented upwards, while for $a\ge j$ it is oriented downwards. If $I_i^m$ is empty, we 
remove the edges of the path $c_1\dotsb c_p$ from \ogm\ entirely. We perform an analogous operation 
for every row $j$ of $\cM$, orienting the edges of the corresponding path in $G_\cM$ so that they 
point towards the cell containing the extremal point of~$J_j^m$, or removing the edges entirely if 
$J_j^m$ is empty.

%

As \ogm\ is a forest, each of its components has a sink. Ignoring the components of \ogm\ that 
correspond to intersections of empty rows with empty columns, and choosing a sink of one of the 
remaining components, we find a column $i$ and a row $j$ of $\cM$ such that both $I^m_i$ and $J^m_j$ 
have their extremal points in $I^m_i\times J^m_j$. As we observed, this must be the same point~$p$. 

We set $\sigma_{m+1}$ to be $p.x$ and we define the interval families $\cI^{m+1}$, $\cJ^{m+1}$ by 
removing $p.x$ from $\cI^{m}$ and $p.y$ from~$\cJ^{m}$. Observe that $\cI^{m+1}$, 
$\cJ^{m+1}$ are well-defined because $p$ was the extremal point of both intervals $I^m_i$ and 
$J^m_j$.
\end{proof}

\begin{lemma}
\label{lem:long-path-width} 
Let $\cM$ be a monotone gridding matrix such that $G_\cM$ contains a path of length $k$. Then there 
exists a permutation $\pi \in \Grid(\cM)$ with grid-width at least $\frac{k}{4}$.
\end{lemma}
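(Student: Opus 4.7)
The plan is to build a permutation $\pi \in \Grid(\cM)$ of length roughly $(k+1)^2$ by packing $N := k+1$ points into each cell along the given path $v_0, v_1, \ldots, v_k$, arranged so that globally the point set resembles a grid-like "staircase" whose grid-complexity cannot be reduced below $k/4$ by any recursive splitting.

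\textbf{Construction.} In each cell $v_i$ I will place $N$ points, labeled $p_{i,0}, p_{i,1}, \ldots, p_{i,k}$, whose coordinates are chosen to satisfy two constraints: within each cell $v_i$ the points form an $\Inc$- or $\Dec$-sequence dictated by $\cM_{v_i}$, and globally the full point set mimics the classical $N \times N$ anti-diagonal staircase that witnesses grid-width $N$ in the class $\Av(12\cdots(N{+}1))$ (or its reverse for $\Inc$-cells). When the path is confined to a single row or a single column of~$\cM$, the construction reduces verbatim to this staircase. For general paths the $N$ "diagonals" of the staircase are routed along the path: consecutive cells $v_i, v_{i+1}$, which share either a row or a column of $\cM$, are assigned neighbouring ``levels'' along the matching axis, and the $j$-th diagonal $D_j := \{p_{0,j}, p_{1,j}, \ldots, p_{k,j}\}$ is made to traverse one designated sub-band of that axis in each cell.

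\textbf{Lower bound.} Given any grid tree $T$ of $\pi$, pick an internal node $u$ whose subtree contains between $|\pi|/3$ and $2|\pi|/3$ of the points (such a node exists since $T$ is binary). By a pigeonhole argument on the $N \times N$ staircase labelling, $\pi_u^T$ contains $\Omega(k)$ points from some single diagonal $D_j$, and these points come from at least $m \ge k/2$ distinct cells of the path. Because $v_0, v_1, \ldots, v_k$ is a path in $G_\cM$ and each edge commits a single shared row or column, any $m$ cells of the path span at least $m/2$ distinct rows or $m/2$ distinct columns of~$\cM$. The corresponding axis projection of $\pi_u^T \cap D_j$ then decomposes into at least $m/2 \ge k/4$ disjoint integer intervals, since distinct rows (resp.\ columns) of $\cM$ correspond to disjoint sub-intervals of the $y$-axis (resp.\ $x$-axis) which are separated by the coordinates occupied by points of $\pi$ lying \emph{outside} $\pi_u^T$. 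Hence $\pi_u^T$ has grid-complexity at least $k/4$, and so does~$T$.

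\textbf{Main obstacle.} The technical heart of the argument is the construction: one must arrange the staircase so that, regardless of how often the path alternates between horizontal and vertical moves, each cell on the path contributes points to all $N$ diagonals in a way consistent with the cell's $\Inc$/$\Dec$ orientation. The subsequent pigeonhole step is routine once the construction is fixed, but checking that the "gap-creating" points from $\pi \setminus \pi_u^T$ are genuinely present in the correct rows or columns — so that the projected intervals really are separated — requires carefully tracking which cells of $\cM$ are visited by the path and which are skipped, and exploiting the defining property of $G_\cM$ that between two cells adjacent in a single row or column no other infinite cell intervenes.
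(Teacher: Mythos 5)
Your overall plan mirrors the paper's: pack many points along the path, take a node $u$ of the grid tree whose subtree holds a balanced fraction of the points, and argue that the projection of $\pi_u^T$ must fragment into $\Omega(k)$ intervals. But the lower-bound step as you have written it has a genuine gap, and it is exactly the gap you flag as "the technical heart" without actually closing.

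Concretely, you argue that some diagonal $D_j$ meets $\pi_u^T$ in $\Omega(k)$ points coming from $\Omega(k)$ distinct cells, and that because these cells occupy distinct columns (or rows) of $\cM$, the projection of $\pi_u^T\cap D_j$ splits into $\Omega(k)$ intervals "separated by coordinates occupied by points of $\pi$ lying outside $\pi_u^T$." This does not follow. Points of $\pi_u^T$ in $m$ distinct gridding columns do not yield $m$ (or even $m/2$) intervals unless between each pair of consecutive occupied columns there really is a point of $\pi\setminus\pi_u^T$. Nothing in your pigeonhole guarantees this: if $\pi_u^T$ happens to contain, say, the entire contents of a contiguous run of $\lceil (k+1)/2\rceil$ cells along a row, then your chosen diagonal is also supported on a contiguous run of columns, its $x$-projection is a single interval, and no separating point exists there at all. (Such a $\pi_u^T$ passes your balance test, since it has the right cardinality.) To rule this out you would have to show that \emph{every} cell contributes points to both $\pi_u^T$ and its complement — and that is precisely the statement that requires an argument. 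The paper proves it by tracking a block \emph{density} $d_i=|S\cap B_i|/|B_i|$ and showing, via the alternation between consecutive blocks, that $|d_i-d_{i+1}|\le 1/(4k)$; combined with $|S|\in[n/3,2n/3]$ this forbids any block from being fully inside or fully outside $S$, which is what actually produces the separating points in at least half the columns (or rows) spanned by the path. Your proposal has no substitute for this density-propagation step, and the pigeonhole on a single diagonal cannot replace it.

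There is also a secondary concern about the construction itself. You want the global point set to "mimic the classical $N\times N$ anti-diagonal staircase" while simultaneously keeping each cell $\Inc$ or $\Dec$ as $\cM$ dictates; for a path that alternates between $\Inc$ and $\Dec$ cells, or between horizontal and vertical steps, it is not clear these two constraints are simultaneously satisfiable, and you do not exhibit the coordinates. The paper sidesteps this entirely: it only requires that each $B_i\cup B_{i+1}$ be a horizontal or vertical alternation and is explicitly agnostic about the relative order of non-adjacent blocks, which makes the construction manifestly realizable inside $\Grid(\cM)$. I would recommend adopting that weaker, local specification of the permutation (alternation only between consecutive blocks) and then supplying the density-balance argument; as written, the proposal does not constitute a proof.
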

\begin{proof}
Without loss of generality, we assume that $G_\cM$ is a path of length $k$, i.e. all the other 
entries are empty. We construct an $\cM$-gridded permutation $\pi$ of length $n=k^3$ such that 
$\gw(\pi) \geq \frac{k}{4}$ in the following way. The permutation $\pi$ is a union of point sets 
$B_1, \dots, B_k$, called blocks, such that each $B_i$ has size $k^2$ and is contained in the cell 
of the $\cM$-gridding corresponding to the $i$-th vertex of the path. Moreover, for every $i$, the 
point set $B_i \cup B_{i+1}$ forms a horizontal or vertical alternation, depending on whether their 
respective cells share the same row or column. If the relation between point sets $B_i$ and $B_j$ 
for $|i-j| \ge 2$ is not fully determined by the position of their respective cells (they share the 
same row or column) they can be interleaved in arbitrary way as long as $B_i \cup B_{i+1}$ forms an 
alternation for every~$i$.

Let~$T$ be the grid-tree of $\pi$ with minimum grid-width. 
By standard arguments, there is a vertex $v \in T$ such that the subtree of $v$ contains at least 
$\frac{n}{3}$ and at most $\frac{2n}{3}$ leaves. Let $S$ be the subset of the permutation diagram 
of $\pi$ defined by these leaves.

Let the \emph{density} of a block~$B_i$ be the ratio $\frac{|S\cap B_i|}{|B_i|}$, 
denoted by~$d_i$. We claim that the densities of consecutive blocks cannot differ too much, in 
particular that the difference $|d_i - d_{i+1}|$ is at most $\frac{1}{4k}$. Without loss of 
generality, assume that $B_i$ and $B_{i+1}$ share the same column and that $d_{i+1} > d_i$. If the 
density of $B_i$ and $B_{i+1}$ differed by at least $\frac{1}{4k}$, there would be at least $k/4$ 
more points of $S$ in $B_{i+1}$. Since $B_i \cup B_{i+1}$ forms a vertical alternation, we can 
pair each 
point of $B_i$ with the nearest point to the right of it in $B_{i+1}$. Then at least $k/4$ of these 
pairs would consist of a point of $B_{i+1}$ in $S$ and a point of $B_i$ outside of~$S$. The 
intervalicity of $\Pi_x(S \cap (B_i \cup B_{i+1}))$ would thus be at least $k/4$ and that 
is a contradiction.

We now show that each block of $\pi$ contains both a point in $S$ and a point outside of $S$.
Suppose that the block $B_i$ is fully contained in $S$, i.e. the density $d_i$ is equal to~$1$. 
Since the differences in densities of consecutive blocks cannot be larger than $\frac{1}{4k}$, 
every block has density at least $1 - k\frac{1}{4k} = \frac{3}{4}$. But then $S$ must contain at 
least $\frac{3}{4}n$ points, which is a contradiction, since $\frac{1}{3}n\le |S|\le\frac23n$. 
Similarly, there cannot be any block whose density is equal to 0, since then every block would have 
density at most $\frac14$, again a contradiction.

Therefore, every block $B_i$ contains both a point from $S$ and a point outside of~$S$. The number 
of rows and columns of $\cM$ spanned by the path is at least $k$ since every step on the path 
introduces either a new row or a new column. Let us therefore, without loss of generality, assume 
that the path spans at 
least $\frac{k}{2}$ columns. Since each of the columns contains point both from $S$ and outside of 
$S$, any consecutive interval of $\Pi_x(S)$ cannot intersect more than $2$ of these 
columns and the grid-complexity of $S$ is at least~$\frac{k}{4}$. This means that the vertex $v$ of 
$T$ has grid-width at least $\frac k4$, therefore the grid-width of $T$ is at most $\frac k4$, and 
since $T$ was the optimal grid-tree for $\pi$, the grid-width of $\pi$ is  at least $\frac k4$, as 
claimed.
\end{proof}

We proceed to show that whenever $G_\cM$ 
contains a cycle, $\Grid(\cM)$ contains grid subclasses with arbitrarily long paths in their cell 
graph. In fact, all the properties we show about cyclic grid classes rely structurally only 
on the existence of these long paths.

\begin{figure}
\centering
\raisebox{-0.5\height}{\includegraphics[scale=0.6]{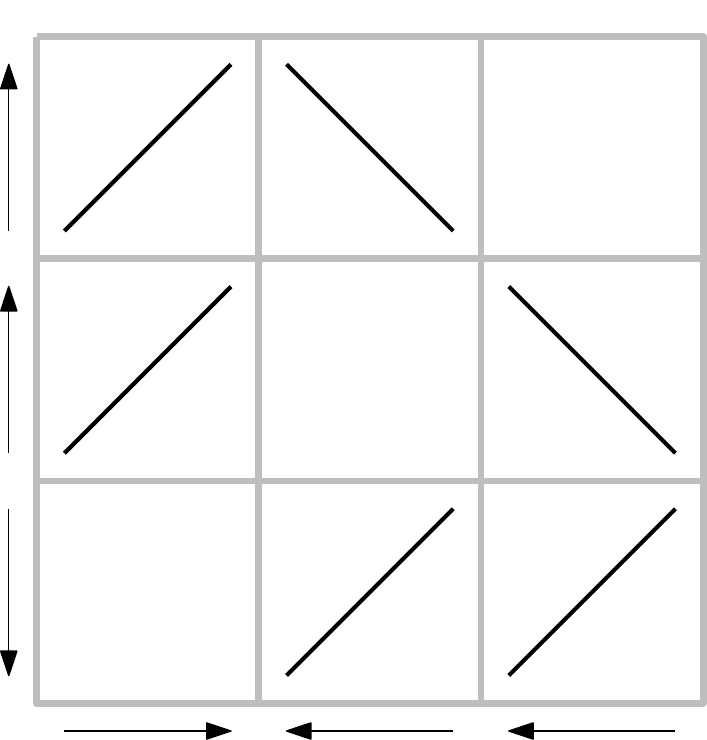}}
\hspace{0.05in}
\raisebox{-0.5\height}{\includegraphics[scale=0.6]{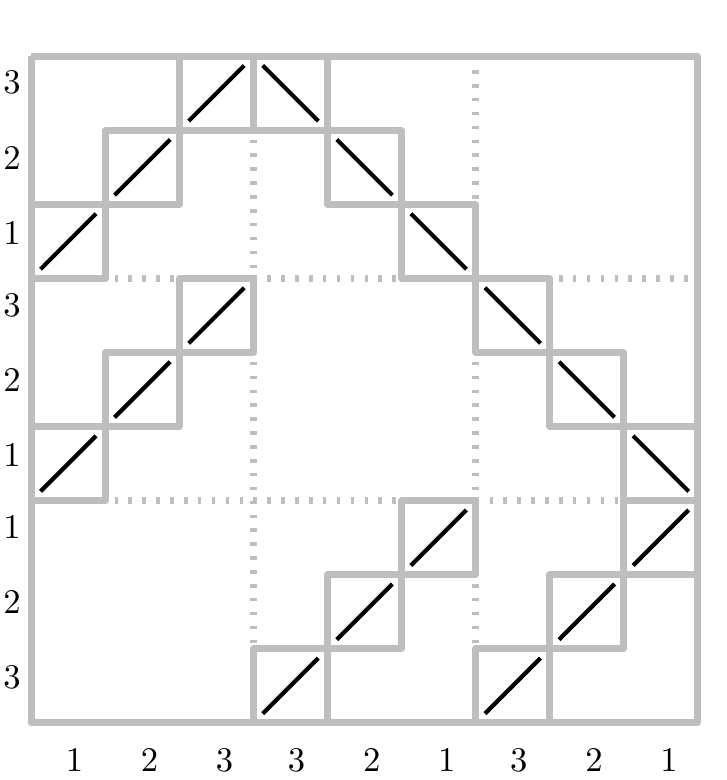}}
\hspace{0.05in}
\raisebox{-0.5\height}{\includegraphics[scale=0.6]{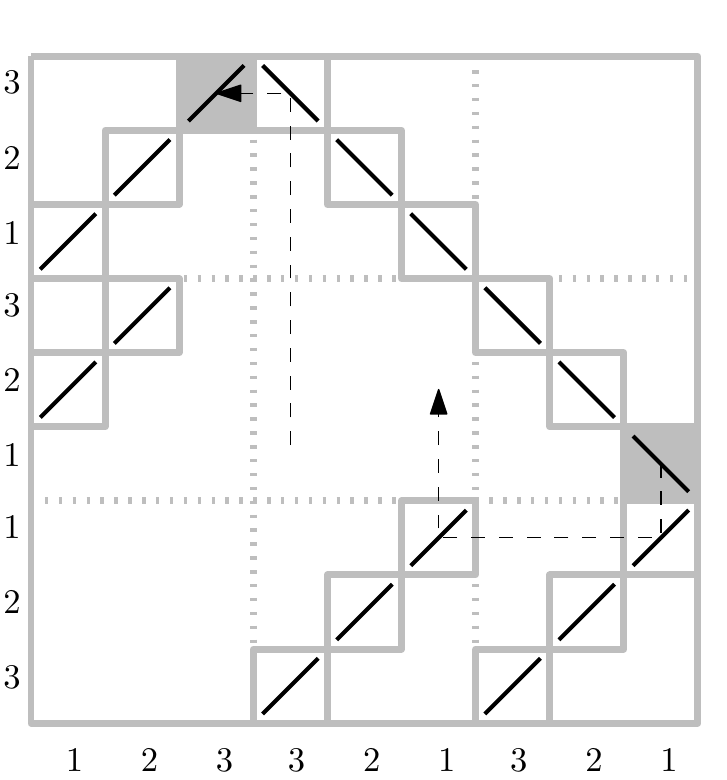}}
\caption{The matrix modification process in the proof of Lemma~\ref{lem:long-path}. A gridding 
matrix $\cM$ with a consistent orientation on the left, its refinement 
$\cM^{\times 3}$ with labeled rows and columns in the middle and the gridding matrix $\cM_3$ on the 
right. The endpoints of path in $G_{\cM_3}$ are highlighted.} \label{fig:path-creation}
\end{figure}

\begin{lemma}
\label{lem:long-path}
Let $\cM$ be a monotone gridding matrix such that $G_\cM$ contains a cycle. For every $p \geq 1$, 
there is a gridding matrix $\cM_p$ such that $\Grid(\cM)$ contains $\Grid(\cM_p)$ as a subclass 
and moreover, $G_{\cM_p}$ is a proper turning path of length at least~$p$. Furthermore, given $\cM$ 
and the integer $p$ we can compute $\cM_p$ in polynomial time.
\end{lemma}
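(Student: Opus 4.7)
My plan is to realize $\cM_p$ as a refinement of $\cM$ with a single ``shift'' that unfolds the refined cycle into a long proper turning path. First, by Lemma~\ref{lem-albert}, after replacing $\cM$ with $\cM^{\times 2}$ if necessary, I would assume that $\cM$ admits a consistent orientation $(c,r)$; this preserves the cycle in $G_\cM$ and keeps $\Grid(\cM^{\times 2}) \subseteq \Grid(\cM)$. I would then fix any cycle $C\colon c_1-c_2-\cdots-c_L-c_1$ in $G_\cM$; since $G_\cM$ is triangle-free, $L \geq 4$.

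For a parameter $q$ chosen with $qL = \Theta(p)$, I would define $\cM_p$ by starting from $\cM^{\times q}$, emptying every block not corresponding to $C$, keeping in each cycle block $c_i$ its standard refinement (a main diagonal of $\Inc$ entries if $c_i=\Inc$, an anti-diagonal of $\Dec$ entries if $c_i=\Dec$), and replacing the refinement of one distinguished block $c_k$ with a \emph{shifted} diagonal having only $q-1$ entries, for instance at positions $(s,s+1)$ inside the block. Since the entries in each modified block still form a strictly increasing (resp.\ decreasing) sequence of positions, each block's grid class is contained in $\Inc$ (resp.\ $\Dec$), and therefore $\Grid(\cM_p)\subseteq\Grid(\cM^{\times q})\subseteq\Grid(\cM)$.

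To show that $G_{\cM_p}$ is a proper turning path of length at least $p$, I would analyse the ``layer action'' around $C$: each edge of $C$ induces a transition on the layer index $s\in[q]$ of a refined cell, either preserving $s$ or reflecting it to $q+1-s$, depending on the orientations of the two endpoints. Without any shift, the composition of these transitions around $C$ is an involution, so the refined cell graph decomposes into several disjoint cycles. Inserting the shift inside $c_k$ perturbs exactly one transition by $+1$, turning the overall composition into a cyclic shift on $[q]$ and thus fusing all orbits into a single connected subgraph. A degree count, combined with the observation that the shifted block has only $q-1$ cells so the walk terminates at two extreme-layer vertices, shows that the resulting subgraph is a path on $qL-O(1)$ vertices whose edges alternate between row- and column-type (hence proper turning). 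Polynomial-time computability of $\cM_p$ from $\cM$ and $p$ is clear.

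The main obstacle will be carrying out the layer-action analysis uniformly: verifying, across all cycle lengths $L$ and all distributions of $\Inc$/$\Dec$ cells along $C$, that after the shift the cell graph has maximum degree $2$, no stray inter-layer edges appear, and consecutive edges along the path alternate in type. Choosing the shift direction and the distinguished block $c_k$ appropriately for each configuration, guided by the consistent orientation, is the crucial step that makes this case analysis go through.
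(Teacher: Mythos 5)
Your high-level plan matches the paper's proof: restrict to the cycle $C$, pass to a refinement that admits a consistent orientation, refine by a factor $q$, and then shift the diagonal inside one block by one layer to ``unfold'' the stacked cycles into a long path. However, there is a genuine gap, and you have correctly identified it yourself: you do not actually carry out the ``layer-action analysis,'' and without it the proof is not complete. The issue is not a small bookkeeping detail---it is precisely the crux of the lemma.

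The paper sidesteps the entire case analysis you are worried about by choosing the right coordinates. Instead of tracking layer indices under the naive labeling (left-to-right in every column block, bottom-to-top in every row block), the paper re-labels the $q$ sub-columns of the $i$-th column block as $1,\dots,q$ left-to-right if $c(i)=1$ and right-to-left if $c(i)=-1$, and similarly for rows using $r(j)$. With this labeling the consistent orientation guarantees that \emph{every} non-empty cell of $\cM^{\times q}$ has characteristic $(s,s)$, so every transition along an edge of $C$ is the identity on the layer index. Consequently $\cM^{\times q}$ decomposes into $q$ disjoint copies of $C$ by construction, and the single shifted block connects layer $s$ to layer $s+1$ for $s\in[q-1]$, immediately giving a proper turning path of length about $q\cdot |C|$. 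In your framing, the transitions can be either the identity or the reflection $s\mapsto q+1-s$; you would then have to (i) prove that the composition around $C$ is the \emph{identity} (you say ``an involution,'' but a non-identity involution would fuse layers $s$ and $q+1-s$ into single cycles of length $2|C|$ rather than give $q$ disjoint copies---identity is exactly what the consistent orientation buys you, but it needs an argument), (ii) prove that shifting one block produces a cyclic shift rather than a different permutation, and (iii) verify degree two, endpoints, and alternation of edge types across all $\Inc/\Dec$ patterns on $C$. None of this is done in your sketch, and your closing paragraph explicitly flags it as the missing step. I recommend adopting the paper's consistent-orientation-driven relabeling, which makes all of (i)--(iii) immediate and eliminates the case analysis entirely.
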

\begin{proof}
Let $C$ be a cycle in $G_\cM$. We can without loss of generality assume that $C$ is proper turning 
and that every cell outside of $C$ is empty, otherwise we could replace all the cells of $\cM$ that 
do not correspond to the turns of $C$ by empty cells.

The proof is illustrated in Figure~\ref{fig:path-creation}. Fix a consistent 
orientation $(c,r)$ of $\cM$ and recall the definition of the refinement 
$\cM^{\times p}$. We proceed by labeling the rows and columns of $\cM^{\times p}$ using the set 
$[p]$. The $p$-tuple of columns created from the $i$-th column of $\cM$ is labeled 
$1,2,\ldots, p$ from left to right if $c(i)$ is positive, and right to left otherwise. Similarly, 
the $p$-tuple of rows created from the $j$-th row of $\cM'$ is labeled $1,2,\ldots, p$ 
from bottom to top if $r(j)$ is positive, and top to bottom otherwise. The \emph{characteristic of 
a cell} in $\cM^{\times p}$ is the pair of labels given to its column and row.
The consistent orientation guarantees that each non-empty cell in $\cM^{\times p}$ 
has a characteristic of form $(s,s)$ for some $s \in [p]$. Therefore, $\cM^{\times p}$ consists 
exactly of $p$ components, each being a copy of $\cM$.

The \emph{$(i,j)$-block} of $\cM^{\times p}$ is the $p \times p$ submatrix corresponding to the 
$(i,j)$-cell of $\cM$. We pick an arbitrary non-empty cell $(i,j)$ of $\cM$ and obtain a matrix 
$\cM_p$ by replacing $(i,j)$-block in $\cM^{\times p}$ with the matrix whose only non-empty 
entries have the characteristic $(s,s+1)$ for all $s \in [p-1]$ and are of the same type as 
$\cM_{i,j}$. $\Grid(\cM_p)$ is a subclass of $\Grid(\cM)$ since the modified $(i,j)$-block 
corresponds to shifting the original (anti-)diagonal matrix by one row either up or down, 
depending on the orientation of the $j$-th row of $\cM$. 

Observe that we connected all the $p$ copies of $\cM$ into a single long path. Moreover, we in fact 
described an algorithm how to compute $\cM_p$ in polynomial time.
\end{proof}

Combining Lemmas~\ref{lem:long-path-width} and~\ref{lem:long-path}, we directly obtain the 
following corollary.

\begin{corollary}\label{cor:unbounded-gw}
Let $\cM$ be a monotone gridding matrix such that $G_\cM$ contains a cycle. Then $\Grid(\cM)$ has 
unbounded grid-width.\qed
\end{corollary}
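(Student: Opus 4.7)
The plan is to combine Lemma~\ref{lem:long-path-width} and Lemma~\ref{lem:long-path} in the straightforward way: given that $G_\cM$ contains a cycle, we want to exhibit, for every target value $N$, a permutation $\pi\in\Grid(\cM)$ whose grid-width is at least $N$. Since $N$ is arbitrary, this gives unboundedness.

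First, I would fix an arbitrary positive integer $N$ and apply Lemma~\ref{lem:long-path} with $p = 4N$. This produces a gridding matrix $\cM_{4N}$ whose cell graph $G_{\cM_{4N}}$ is a (proper turning) path of length at least $4N$, and which satisfies the crucial containment $\Grid(\cM_{4N})\subseteq\Grid(\cM)$.

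Next, I would apply Lemma~\ref{lem:long-path-width} to $\cM_{4N}$. Since $G_{\cM_{4N}}$ contains a path of length at least $4N$, the lemma yields a permutation $\pi\in\Grid(\cM_{4N})$ with $\gw(\pi)\ge 4N/4 = N$. By the containment from the previous step, $\pi$ also belongs to $\Grid(\cM)$. As $N$ was arbitrary, $\Grid(\cM)$ contains permutations of arbitrarily large grid-width, which is exactly the statement of the corollary.

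There is essentially no obstacle here, since both ingredients have already been proved; the only point worth double-checking is that Lemma~\ref{lem:long-path-width} can genuinely be invoked on $\cM_{4N}$ (not merely on $\cM$) and that the subclass relation $\Grid(\cM_{4N})\subseteq\Grid(\cM)$ transfers grid-width lower bounds from $\Grid(\cM_{4N})$ to $\Grid(\cM)$. Both are immediate: Lemma~\ref{lem:long-path-width} is stated for arbitrary monotone gridding matrices, and grid-width is a property of an individual permutation, so membership in the larger class preserves the bound.
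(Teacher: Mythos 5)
Your proposal is correct and matches the paper's reasoning exactly: the paper obtains the corollary "directly" by combining Lemma~\ref{lem:long-path-width} and Lemma~\ref{lem:long-path}, which is precisely what you spell out. The two sanity checks you raise (that Lemma~\ref{lem:long-path-width} applies to $\cM_p$, and that subclass containment transfers grid-width lower bounds) are indeed immediate and correctly dispensed with.
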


In order to state and prove our hardness result that contrasts Proposition~\ref{prop:small-pw}, we 
need to introduce several definitions.
Let $\cC$ and $\cD$ be permutation classes and $k$ a positive integer.
The \emph{$(\cC, \cD)$-staircase of $k$ steps} is the $(k + 1)\times k$ gridding matrix $\St^k(\cC, 
\cD)$ such that the $(i,i)$-cell contains $\cC$, the $(i+1,i)$-cell contains $\cD$ for every $i 
\in [k]$, and every other cell is empty. The staircase classes have been studied by Albert et 
al.~\cite{Albert2019} in the context 
of determining the growth rates of certain permutation classes.


Let $\cM, \cN$ be $k \times \ell$ gridding matrices, let $\pi$ be an $\cM$-gridded permutation 
and let $\tau$ an $\cN$-gridded permutation. We say that $\tau$ contains a \emph{grid-preserving 
copy} of $\pi$, if there is an occurrence of $\pi$ in $\tau$ such that the elements from the 
$(i,j)$-cell of the $\cM$-gridding of $\pi$ are mapped to elements in the $(i,j)$-cell of the 
$\cN$-gridding of $\tau$ for every $i$ and $j$.

To complete the proof of Theorem~\ref{thm:monotone-grid}, we need to show that \PPPM{$\Grid(\cM)$} 
is NP-complete whenever $G_\cM$ contains a cycle. Our argument is based upon a construction of 
Jelínek and Kynčl~\cite{Jelinek2017}, who proved that that \PPPM{$\Av(321)$} is NP-complete. The 
following theorem describes one of the key steps of their proof.

\begin{theorem}[Jelínek and Kynčl~\cite{Jelinek2017}]
\label{thm:jelinek-hard}
Let $\Phi$ be a 3-CNF formula with $v$ variables and $c$ clauses. There is a polynomial time 
algorithm that outputs a $\St^{2c+1}(\Inc, \Inc)$-gridded permutation $\pi$ and a 
$\St^{2c+1}(\Av(321), \Inc)$-gridded permutation $\tau$ 
such that $\Phi$ is satisfiable  if and only if there is a grid-preserving copy of $\pi$ in $\tau$. 
Additionally, the longest increasing subsequences of the $(1,1)$-cell of $\pi$ and the 
$(1,1)$-cell of $\tau$ are both of length $2v$.
\end{theorem}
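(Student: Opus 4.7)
My plan is a polynomial-time many-one reduction from 3-SAT: given $\Phi$, I construct $\pi$ and $\tau$ so that grid-preserving copies of $\pi$ inside $\tau$ correspond bijectively to satisfying assignments of $\Phi$. The architecture follows the staircase shape: the $(1,1)$-cells of both permutations form a \emph{variable block} encoding a truth assignment of the $v$ variables, and the remaining $2c$ diagonal cells together with the $\Inc$ connector cells above them form $c$ successive \emph{clause blocks}, each verifying one clause of $\Phi$ as the embedding climbs the staircase.

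\textbf{Variable block.} In the $(1,1)$-cell of $\pi$, which must lie in $\Inc$, I would place a trivial increasing run of $2v$ points, thought of as two copies of each variable; this automatically matches the claimed LIS length $2v$. In the $(1,1)$-cell of $\tau$, which lies in $\Av(321)$, I would interleave two increasing sequences, each of length $2v$: one whose points represent the ``true'' literal of each variable and one whose points represent the ``false'' literal. These two sequences must be stacked so that any increasing subsequence of length $2v$ is forced, for every variable $x_i$, to pick either both of its ``true'' points or both of its ``false'' points, but never a mix. Grid-preserving embeddings of the variable block then correspond exactly to truth assignments, while the $\Av(321)$ constraint caps the LIS of this cell at~$2v$.

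\textbf{Clause blocks.} For each clause $C_j$ I would reserve two consecutive staircase steps, namely the diagonal cells $(2j, 2j)$ and $(2j{+}1, 2j{+}1)$ of $\tau$, both in $\Av(321)$, together with the $\Inc$ connector cells sitting above them. Inside these cells I would place three alternative checking gadgets, one per literal of $C_j$, such that the rigid $\Inc$ pattern that $\pi$ puts in the corresponding position can be embedded only through one of these three gadgets, and only when the variable block has actually set the associated literal to true. The $\Inc$ connector cells act as rigid \emph{channels}: they propagate the chosen assignment upwards through the staircase and transmit it to every clause verifier, and their rigidity is precisely what enforces global consistency across all clauses.

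The main obstacle is designing the clause gadgets and the connectors so that the Boolean information chosen in the variable block at the bottom of the staircase propagates faithfully through every step. Each $\Inc$ connector cell is extremely restrictive, so it can only carry very structured data, yet simultaneously the cascading $\Av(321)$ verifiers must leave enough flexibility to realise every satisfying assignment but no others. Balancing these opposing requirements---so that completeness (every satisfying assignment yields a grid-preserving embedding) and soundness (every grid-preserving embedding yields a satisfying assignment) hold simultaneously, and on instances of polynomial size---is the technical heart of the construction and the reason the gadgetry of Jelínek and Kynčl is intricate.
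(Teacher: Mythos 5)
The paper does not prove Theorem~\ref{thm:jelinek-hard} at all: it is imported verbatim as a black box from Jelínek and Kynčl~\cite{Jelinek2017}, and the surrounding text says only that ``the following theorem describes one of the key steps of their proof.'' So there is no ``paper's own proof'' to compare against; what you have written is a sketch of how you imagine the cited construction goes.

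As a proof, your proposal has a genuine and decisive gap, one you explicitly concede: you defer the entire gadget design to the observation that ``the gadgetry of Jelínek and Kynčl is intricate.'' The statement that needs proving is not the reduction \emph{framework} (variable block at the bottom, clause verifiers up the staircase, $\Inc$ connectors carrying consistency information), which is indeed the standard template for such reductions, but the concrete instantiation: an explicit polynomial-size construction of $\pi$ and $\tau$ together with a two-sided argument that grid-preserving embeddings exist exactly for satisfying assignments. Without specifying (i) the exact interleaving in the $(1,1)$-cell of $\tau$ that caps the LIS at $2v$ while still allowing any pure truth-choice per variable, (ii) the three literal gadgets per clause and why exactly one must be used, and (iii) how the rigid $\Inc$ channels (which, incidentally, sit to the \emph{right} of the diagonal $\Av(321)$ cells in $\St^{2c+1}$, not above them) transmit the chosen assignment without permitting ``cheating,'' there is no soundness argument and hence no proof. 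Your claim that ``grid-preserving copies of $\pi$ inside $\tau$ correspond bijectively to satisfying assignments'' is also stronger than what the theorem asserts and would need justification in its own right. In short, you have restated what the theorem should look like structurally, but the content of the reduction is precisely the part you leave out.
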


Let us modify $\pi$ and $\tau$ such that any embedding that maps the $(1,1)$- 
cell of $\pi$ to the $(1,1)$-cell of $\tau$ must already be grid-preserving.

The \emph{lane of $k$ steps} is the $\St^{k}(\Inc, \Inc)$-gridded permutation such that each 
non-empty cell of the staircase contains exactly 2 points and two neighboring cells in the same row 
form a copy of 1423 while two neighboring cells in the same column form a copy of 1342. The 
intuition here is that as the lane intersects two adjacent cells in the same row, the two elements 
in the left cell (corresponding to $1$ and $4$ of the pattern $1423$) are `sandwiching' the two 
elements of the right cell from above and from below. Similarly, for two cells in the same column, 
the bottom part of the lane sandwiches the top part from the left and from the right.

For a $\St^{k}(\cC, \cD)$-gridded permutation $\pi$, the result of \emph{confining $\pi$} is the 
following gridded permutation $\pi'$. Let $\cN$ be the gridding matrix obtained by replacing every 
non-empty entry of $\St^k(\cC, \cD)$ with a $3\times3$ diagonal matrix whose (1,1)-cell and 
(3,3)-cell contain $\Inc$ and the $(2,2)$-cell is equal to the original entry. Observe that 
$\cN$ consists of 3 components, namely a copy of $\St^{k}(\cC, \cD)$ `sandwiched' between two 
copies of $\St^{k}(\Inc, \Inc)$. We obtain $\pi'$ by placing $\pi$ in the middle copy and two lanes 
of $k$ steps in the outer copies. Finally, we unify each $3 \times 3$ block of the $\cN$-gridded 
$\pi'$ into a single cell. See the left part of Figure~\ref{fig:np-reduct}.

Let $\pi$ and $\tau$ be the gridded permutations from Theorem~\ref{thm:jelinek-hard} and $\pi', 
\tau'$ their confined versions. Suppose there is an occurrence of $\pi'$ in $\tau'$ that maps the 
$(1,1)$-cell of $\pi'$ to the $(1,1)$-cell of $\tau'$. It must map the first two points of the two 
lanes in $\pi'$ to the first two points of the lanes in $\tau'$ since the longest increasing 
subsequences in the $(1,1)$-cells of $\pi$ and $\tau$ are of the same length. But then
the whole lanes from $\pi'$ map to the respective lanes in $\tau'$; this is because the pair of 
elements in a given cell of the lane in $\pi'$ (or $\tau'$) sandwiches exactly two other elements of 
$\pi'$ (or $\tau'$), namely the two elements of the same line belonging to the following cell.
This then forces the elements of each non-empty $(i,j)$-cell of $\pi'$ to map to the 
$(i,j)$-cell of~$\tau'$.

\begin{proposition}
\label{prop:cyclic-grid}
Let $\cM$ be a monotone gridding matrix such that $G_\cM$ contains a cycle. Then 
\PPPM{$\Grid(\cM)$} is NP-complete.
\end{proposition}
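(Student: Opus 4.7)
The plan is to reduce 3-SAT to \PPPM{$\Grid(\cM)$} by combining the Jelínek--Kynčl staircase construction of Theorem~\ref{thm:jelinek-hard} with the subclass extraction of Lemma~\ref{lem:long-path}. Given a 3-CNF formula $\Phi$ with $v$ variables and $c$ clauses, I first invoke Theorem~\ref{thm:jelinek-hard} to obtain in polynomial time the gridded permutations $\pi$ and $\tau$, and then apply the confining operation described before the proposition to obtain $\pi'$ and $\tau'$. By construction, $\pi'$ lives in a grid class whose cell graph is a proper turning path of length $O(c)$ in which every non-empty cell is of type $\Inc$, and the two embedded lanes force any occurrence of $\pi'$ in $\tau'$ that aligns the $(1,1)$-cells to be grid-preserving.

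The next step is to embed $\pi'$ into $\Grid(\cM)$. Since $G_\cM$ contains a cycle, Lemma~\ref{lem:long-path} produces a gridding matrix $\cM_p$ with $\Grid(\cM_p)\subseteq\Grid(\cM)$ and $G_{\cM_p}$ a proper turning path of length at least $p$, where I choose $p$ to be linear in $c$ so that the path is long enough to accommodate the gridding of $\pi'$. Along the chosen segment of $\cM_p$ the cells are not necessarily of type $\Inc$, but I can match the orientations by using the consistent orientation $(c,r)$ of $\cM_p$ and locally reversing, complementing, or inverting the content of each cell of $\pi'$ and $\tau'$ as dictated by the target. These cellwise symmetries preserve the $1423$/$1342$ sandwich structure of the lanes (in its horizontally or vertically reflected form) and all internal relations, so the resulting permutations $\widehat{\pi}$ and $\widehat{\tau}$ satisfy $\widehat{\pi}\in\Grid(\cM_p)\subseteq\Grid(\cM)$ and $\widehat{\tau}$ contains $\widehat{\pi}$ iff $\tau'$ contains $\pi'$.

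The main obstacle is arguing that every occurrence of $\widehat{\pi}$ in $\widehat{\tau}$, not only those aligning the $(1,1)$-cells, corresponds to a grid-preserving copy of $\pi$ in $\tau$ and thus to a satisfying assignment of $\Phi$. The intended resolution is that the first pair of points of each of the two lanes forms the unique longest increasing (or, after reflection, monotone) subsequence of the appropriate extremal cell of $\widehat{\tau}$, so any embedding is forced to send these anchoring points to their counterparts in $\widehat{\tau}$. Once this initial alignment is fixed, the sandwich property propagates cell by cell along the staircase: the two elements of the next cell of the lane must sit between the two elements of the previous cell in the alternation direction, leaving no room for misalignment, and consequently the elements of the middle copies of the staircase are forced into their intended cells. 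The confining argument of the preceding paragraph then applies verbatim and establishes grid-preservation globally.

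Combined with Theorem~\ref{thm:jelinek-hard}, this shows that $\Phi$ is satisfiable iff $\widehat{\tau}$ contains $\widehat{\pi}$, giving the NP-hardness of \PPPM{$\Grid(\cM)$}. The whole reduction is polynomial in $|\Phi|$ because Theorem~\ref{thm:jelinek-hard}, the confining operation, and Lemma~\ref{lem:long-path} are all polynomial, and containment in NP is standard, since any candidate occurrence of a pattern of length $k$ in a text of length $n$ can be verified in time $O(k\log k)$.
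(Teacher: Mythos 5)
Your overall plan follows the paper's structure up to a point: start from Theorem~\ref{thm:jelinek-hard}, confine $\pi$ and $\tau$ with lanes, extract a long proper turning path via Lemma~\ref{lem:long-path}, and use a cellwise symmetry (the paper formalizes this as the $(f,g)$-transform built from the consistent orientation of $\cM_p$) to move the staircase instance into $\Grid(\cM_p)$. That part is fine and matches the paper's argument.

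The gap is in your final step, where you need to show that an \emph{arbitrary} occurrence of $\widehat\pi$ in $\widehat\tau$ (as ungridded permutations) is forced to be grid-preserving. You claim that the first pair of points of the lanes forms ``the unique longest increasing (or, after reflection, monotone) subsequence of the appropriate extremal cell of $\widehat\tau$'' and that this forces the initial alignment. This is false on its face: by Theorem~\ref{thm:jelinek-hard} the $(1,1)$-cell of $\tau$ already has longest increasing subsequence of length $2v$, so two lane points are nowhere close to being extremal. More fundamentally, even a correct statement of this kind would not help, because an embedding of $\widehat\pi$ into $\widehat\tau$ is under no obligation to respect cells at all --- it can place the image of the initial block of $\widehat\pi$ anywhere in $\widehat\tau$. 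The confining argument only shows that \emph{once} the $(1,1)$-cells are aligned, the lanes propagate grid-preservation; it cannot establish the initial alignment itself. The paper supplies a separate mechanism for this: it inserts two new increasing ``anchor'' sequences of length $p+1$, where $p$ is the length of the longest monotone subpermutation of $\tau'$, just outside the initial cell of both $\pi'$ and $\tau'$. Since $\tau'$ has no increasing sequence of length $p+1$, a pigeonhole argument forces at least one point of each anchor of $\pi^\star$ to map into the corresponding anchor of $\tau^\star$, which pins the initial block in place; only then do the lanes take over. Without this (or some equivalent) anchoring gadget, your reduction does not establish that every occurrence encodes a satisfying assignment, and the proof does not go through.
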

\begin{proof}

We describe the reduction from 3-SAT to \PPPM{$\Grid(\cM)$}. Let $\Phi$ be a given 3-CNF 
formula with $v$ variables and $c$ clauses. Using Theorem~\ref{thm:jelinek-hard}, we compute 
gridded permutations $\pi$ and $\tau$ such that $\Phi$ is satisfiable if and only if there is a 
grid-preserving copy of $\pi$ in $\tau$. As we have shown, we can assume that 
any occurrence of $\pi$ in $\tau$ that maps the $(1,1)$-cell of $\pi$ to the $(1,1)$-cell of $\tau$ 
must be grid-preserving. Furthermore, we obtain a monotone gridding matrix $\cM'$ such that 
$\Grid(\cM')$ is a subclass of $\Grid(\cM)$ and $G_{\cM'}$ is a proper turning path $P$ of length 
$4c+2$ by application of Lemma~\ref{lem:long-path}. Without loss of generality, we may assume that 
the first two cells of this path occupy the same row.

\begin{figure}
\centering
\raisebox{-0.5\height}{\includegraphics[width=0.4\textwidth]{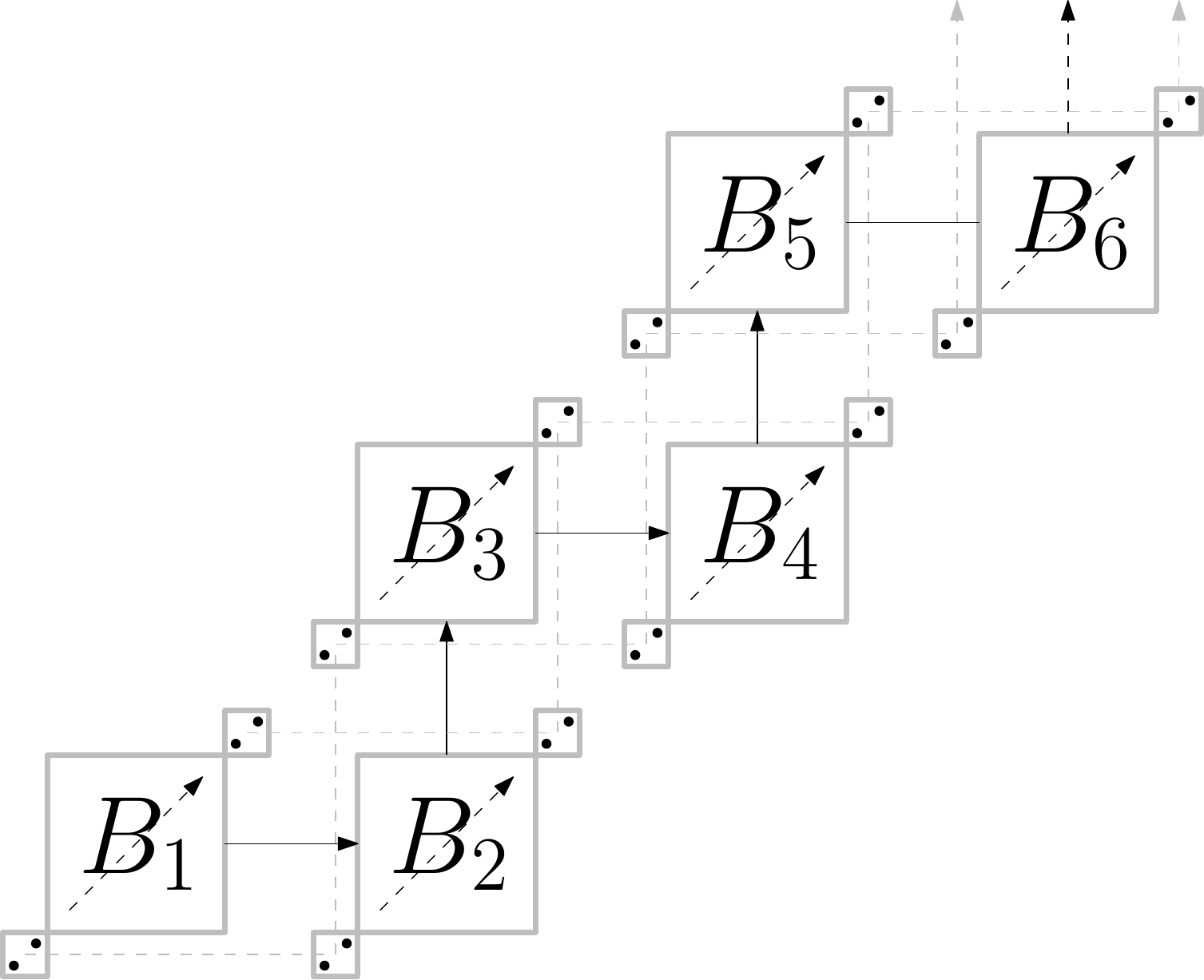}}
\hspace{0.5in}
\raisebox{-0.5\height}{\includegraphics[width=0.4\textwidth]{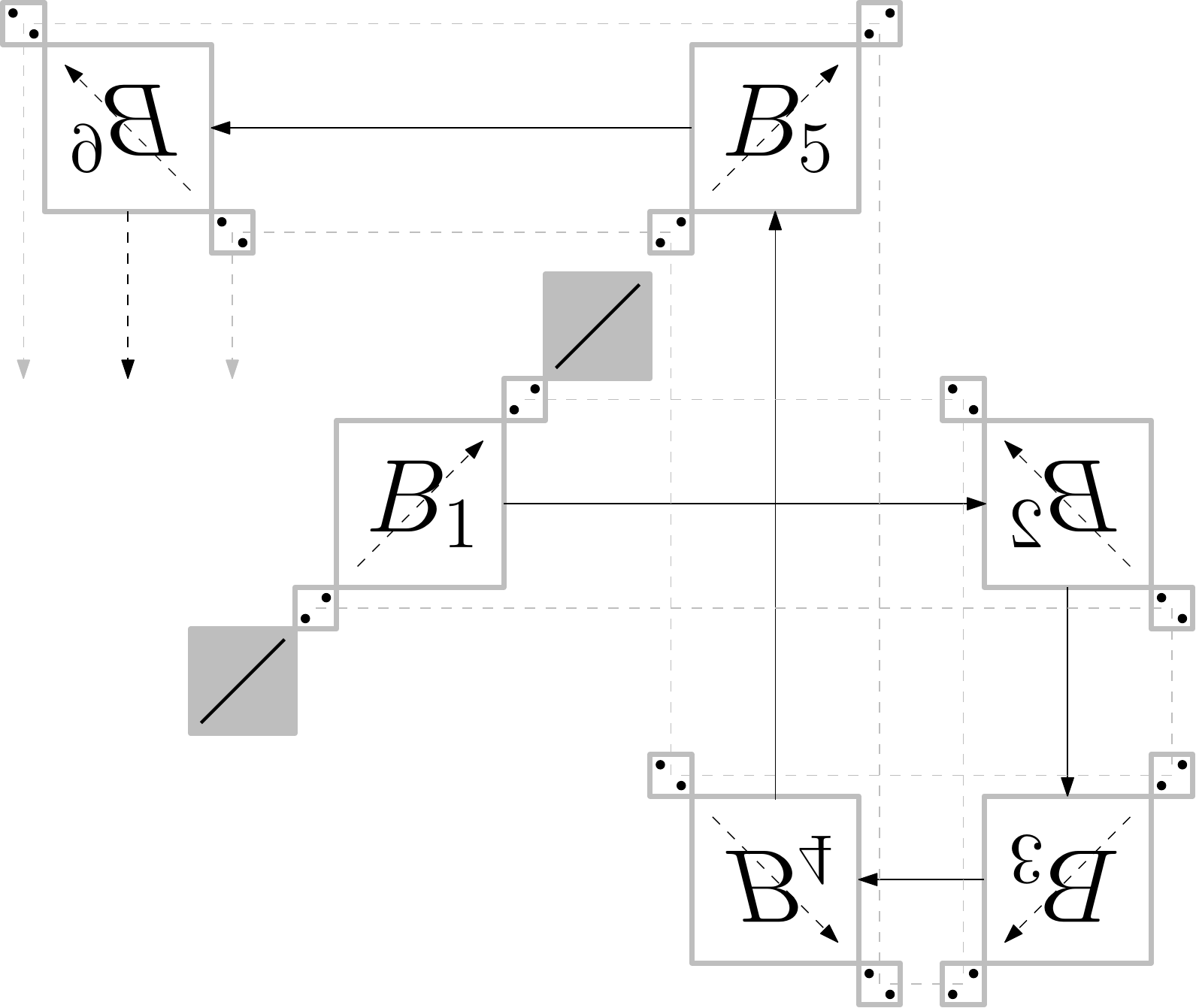}}
\caption{Situation in the proof of Proposition~\ref{prop:cyclic-grid}. The confined permutation 
$\pi$ on the left, the permutation $\pi^\star$ obtained by adding anchors (highlighted) to the 
$(f,g)$-transform 
$\pi'$ on the right.}
\label{fig:np-reduct}
\end{figure}

First, we aim to construct gridded permutations $\pi'$ and $\tau'$ such that $\pi' \in \Grid(\cM')$ 
and there is a grid-preserving copy of $\pi'$ in $\tau'$ if and only if there is a grid-preserving 
copy of $\pi$ in~$\tau$. We essentially aim to generalize the `twirl' operation used by Jelínek and 
Kynčl~\cite{Jelinek2017}. A \emph{signed permutation of length $k$} is a permutation of length 
$k$ in which each entry is additionally equipped with a sign.
Let $\cQ$ be an arbitrary $k \times \ell$ gridding matrix and let $f$ be a signed permutation of 
length $k$ and $g$ be a signed permutation of length $\ell$. The \emph{$(f,g)$-transform} of $\cQ$ 
is a gridding matrix $\cQ'$ such that $\cQ'_{i,j} = (\cQ_{|f(i)|, |g(j)|})^o$ where the operation 
$o$ is identity if both $f(i)$ and $g(j)$ are positive, reversal if only $f(i)$ is negative, 
complement if only $g(j)$ is negative and reverse complement if both $f(i)$ and $g(j)$ are negative.
In other words, $\cQ'$ is created by permuting the rows and columns of $\cQ$ according to the 
permutations $f$ and $g$ while also flipping around those with negative signs. 

Analogously, we can define a transformation of a $\cQ$-gridded permutation into a $\cQ'$-gridded 
one. The \emph{$(f,g)$-transform} of a $\cQ$-gridded permutation $\sigma$ is the gridded 
permutation $\sigma'$ obtained by permuting the columns of the gridding according to $f$, rows 
according to $g$, replacing the point set of the $i$-th column with its reversal if $f(i)$ is 
negative and then replacing the point set of the $j$-th row with its complement if $g(j)$ is 
negative. 

For any fixed $f$ and $g$, there is indeed a grid-preserving occurrence of $\pi$ in $\tau$ 
if and only if there is a grid-preserving occurrence of the $(f,g)$-transform of $\pi$ in the 
$(f,g)$-transform of~$\tau$. Let us now define $f$ and $g$ such that the $(f,g)$-transform of 
$\St^{2c+1}(\Inc, \Inc)$ is exactly the gridding matrix $\cM'$. Let $(c,r)$ be a consistent 
orientation of $\cM'$, let $s_1, \dots, s_{2c+2}$ be the sequence of column indices in the order 
as visited by the path $P$ and let $t_1, \dots, t_{2c+1}$ be the sequence of row indices in the 
order as 
visited by $P$. We set $f(s_i) = c(i)i$ and $g(t_j)=r(j)j$. Observe that the $i$-th cell of 
the path in the staircase is mapped by the $(f,g)$-transform precisely to the $i$-th cell of $P$. 
Moreover, the consistent orientation guarantees that the type of the entry obtained via 
$(f,g)$-transform agrees with the type of the entry in $\cM'$. Let $\cN$ be the $(f,g)$-transform of
$\St^{2c+1}(\Av(321), \Inc)$.

We set $\pi'$ to be the $(f,g)$-transform of $\pi$ and $\tau'$ to be the $(f,g)$-transform of 
$\tau$. As we already argued, $\pi'$ is an $\cM'$-gridded permutation, $\tau'$ is an $\cN$-gridded 
permutation and there is a grid-preserving occurrence of $\pi'$ in $\tau'$ if and only if there is 
a grid-preserving occurrence of $\pi$ in $\tau$. Moreover it is still true that if there is an 
occurrence of $\pi'$ in $\tau'$ that maps the $(s_1,t_1)$-cell to the $(s_1,t_1)$-cell of $\tau'$ 
then it must be grid-preserving. By the very same argument as before, in any map of the 
$(s_1,t_1)$-cell of $\pi'$ the beginning of the $(f,g)$-transformed lanes must map to the 
beginnings of the $(f,g)$-transformed lanes in $\tau'$. As before, this forces the whole mapping to 
be grid-preserving.

To summarize the argument so far, we used Theorem~\ref{thm:jelinek-hard} to reduce the NP-complete 
3-SAT problem to the problem of deciding whether a $\St^{2c+1}(\Inc, \Inc)$-gridded 
permutation $\pi$ has a grid-preserving occurrence in a $\St^{2c+1}(\Av(321), \Inc)$-gridded 
permutation $\tau$. Then, by means of the $(f,g)$-transform, we showed that this is equivalent to 
finding a grid-preserving occurrence of an $\cM'$-gridded permutation $\pi'$ in an $\cN$-gridded 
permutation~$\tau'$.

As the final step of our argument, we transform $\pi'$ and $\tau'$ into permutations $\pi^\star 
\in \Grid(\cM')$ and $\tau^\star \in \Grid(\cN)$ (that are no longer gridded) such that $\tau^\star$ 
contains $\pi^\star$ if and only if there is a grid-preserving copy of $\pi'$ in~$\tau'$. This will 
imply that \PPPM{$\Grid(\cM')$} is NP-complete, and therefore \PPPM{$\Grid(\cM)$} is NP-complete as 
well.

Let $p$ be the length of the longest monotone subpermutation of $\tau'$. Suppose that the 
$(s_1,t_1)$-cell of $\cM'$ contains $\Inc$, the other case being symmetric. We obtain $\pi^\star$ by 
inserting two increasing sequences of length $p+1$ into $\pi'$, one directly to the left and below 
the $(s_1,t_1)$-cell, called a lower anchor, and one directly above and to the right of the 
$(s_1,t_1)$-cell, called an upper anchor. We perform the same modification on $\tau'$ to obtain 
$\tau^\star$. See Figure~\ref{fig:np-reduct}.

Clearly, if $\tau'$ contains a grid-preserving copy of $\pi'$, then $\tau^\star$ contains 
$\pi^\star$. Let us prove the converse.
Fix any embedding $E$ of $\pi^\star$ into $\tau^\star$. Since there is no increasing subsequence of 
length $p+1$ in $\tau'$, at least $p+2$ of the points from the anchors in $\pi^\star$ must map to 
the anchors of $\tau^\star$. In particular, there is a point of the upper anchor in $\pi^\star$ 
mapped to the upper anchor in $\tau^\star$ and the same holds for the lower anchors. This implies 
that $E$ maps the whole copy of the initial block of $\pi'$ inside $\pi^\star$ to the initial 
block of~$\tau^\star$. We claim that $E$ in fact maps the whole initial block of $\pi'$ inside 
$\pi^\star$ to the initial block of $\tau'$ inside~$\tau^\star$, except perhaps the rightmost point 
and the leftmost point of the initial block of $\pi'$ (which belong to the lanes). 
Suppose for contradiction that the two leftmost points $q$ and $r$ of the initial block of $\pi'$, 
i.e. the beginning of one of the lanes, map both to the lower anchor. The second block of the 
lane must lie in the vertical interval between $q$ and $r$ either to the right of both of them or 
to the left of both of them. But for any two points of the lower anchor in $\tau^\star$ there are 
no such points to map $q$ and $r$ to. The same is true for the beginning of the second lane. 

From this argument, it actually follows that we can modify the embedding $E$ so that it maps the 
leftmost and rightmost point of the initial block of $\pi'$ to the leftmost and rightmost point of 
the initial block of~$\tau'$. By the previous arguments, the restriction of $E$ to $\pi'$ yields a 
grid-preserving embedding of $\pi'$ into $\tau'$, which concludes the proof.
\end{proof}

We have now completed the proof of Theorem~\ref{thm:monotone-grid}: the first part of the theorem 
follows by combining Proposition~\ref{prop:small-pw} with Theorem~\ref{thm:bounded-gw-algo}, and 
the second part follows from Corollary~\ref{cor:unbounded-gw} and 
Proposition~\ref{prop:cyclic-grid}.


\section{General grid classes}
\label{sec:general}

In this section, we generalize the results of Theorem~\ref{thm:monotone-grid} to any gridding 
matrix whose every entry has bounded grid-width. Note that a `bumper-ended' path is a certain kind 
of path in $G_\cM$ whose definition we provide later.

\begin{theorem}
\label{thm:general-grid}
Let $\cM$ be a gridding matrix such that every entry of $\cM$ has bounded grid-width. Then one of 
the following holds:
\begin{itemize}
\item Either $G_\cM$ is a forest that avoids a bumper-ended path, $\Grid(\cM)$ has bounded 
grid-width and \PPPM{$\Grid(\cM)$} can be decided in polynomial time, or
\item $G_\cM$ contains a bumper-ended path or a cycle, $\Grid(\cM)$ has unbounded grid-width and 
\PPPM{$\Grid(\cM)$} is NP-complete.
\end{itemize}
\end{theorem}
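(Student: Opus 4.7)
The proof naturally splits into the two halves of the dichotomy, each obtained by lifting the monotone case (Theorem~\ref{thm:monotone-grid}) to the richer setting where cells now hold classes of bounded grid-width rather than just $\Inc$ and $\Dec$. Before doing anything, I would pin down the definition of a bumper-ended path: informally, a bumper is a cell whose class is rich enough (containing, say, both arbitrarily long horizontal \emph{and} vertical alternations, so that both $\Inc$ and $\Dec$ subcells appear) to reproduce, at a path's end, the ``wrap-around'' behavior that a single edge closing a cycle supplies in Lemma~\ref{lem:long-path}. The dichotomy is then calibrated so that the absence of such endpoints lets the grid-tree construction succeed, while their presence is exactly what the hardness gadget needs.

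\textbf{Polynomial direction.} Assume $G_\cM$ is a forest that avoids a bumper-ended path. The plan is to show $\Grid(\cM)$ has bounded grid-width and invoke Theorem~\ref{thm:bounded-gw-algo}. Given a permutation $\pi\in\Grid(\cM)$ with a fixed $\cM$-gridding, I would build a two-level grid tree. The \emph{outer} level is a caterpillar-like tree over cells, built as in Proposition~\ref{prop:small-pw}: after refining to obtain a consistent orientation via Lemma~\ref{lem-albert}, one peels off cells in an order obtained by repeatedly sinking along the forest ${\vec G}_\cM$, so that at each step the active rows and columns decompose into a bounded number of intervals. The \emph{inner} level replaces each macro-leaf by an optimal grid tree of the restriction of $\pi$ to that cell; such a tree has bounded grid-width because the cell's class has bounded grid-width by assumption. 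The nontrivial step is bookkeeping the interaction between the two layers: processing a cell's interior can increase intervalicity along rows and columns shared with neighboring cells, and the forbidden bumper-ended path configuration is what guarantees this blow-up is bounded by a constant depending only on $\cM$.

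\textbf{Hardness direction.} Here there are two sources of unbounded grid-width and of NP-hardness: a cycle in $G_\cM$, and a bumper-ended path. The cycle case essentially reduces to the monotone case: every infinite permutation class contains $\Inc$ or $\Dec$ as a subclass, so I can choose a monotone subclass inside each cell along the cycle, form a monotone subgrid $\cM'$ of $\cM$ whose cell graph still contains that cycle, and apply Corollary~\ref{cor:unbounded-gw} and Proposition~\ref{prop:cyclic-grid} to $\cM'$. The bumper-ended path case is where new work is needed. The idea is to mimic Lemma~\ref{lem:long-path}: a bumper at the end of a path lets us ``fold'' the refinement back onto itself, producing arbitrarily long proper turning paths in the cell graph of a subclass of $\Grid(\cM)$. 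Lemma~\ref{lem:long-path-width} then yields unbounded grid-width, and repeating the gadget construction of Proposition~\ref{prop:cyclic-grid} (with the bumper cell playing the role of the closing edge of the cycle, providing the alternation structure needed by the confining lanes and the $(f,g)$-transform) gives NP-completeness.

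\textbf{Main obstacle.} The crux is to align the definition of a bumper-ended path with both directions at once. For hardness, one needs the endpoint cell to supply enough structure — plausibly, infinite subclasses containing both a horizontal and a vertical alternation — to substitute for the missing cycle edge in the Jelínek--Kynčl reduction; for the algorithm, this exact negation must suffice to keep the two-level grid tree's width bounded. Verifying that these two thresholds coincide, so that no intermediate ``gap'' cases escape the dichotomy, is where the technical subtleties concentrate; once the definition is fixed correctly, the rest of the argument should follow the monotone template fairly closely.
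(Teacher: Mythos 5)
Your high-level plan — reduce to a monotone cycle for hardness, and for the algorithm build a grid tree whose ``outer'' structure follows the cell graph and whose ``inner'' parts handle each cell's content — is the right framework, and the hardness direction in particular is close to the paper. But there are two gaps that would need to be fixed before the argument compiles.

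First, the definition of a bumper-ended path that you hypothesize is wrong, and the mistake matters. In the paper a bumper is an ordered \emph{pair} $(p,q)$ of cells: it requires $\cM_q$ to have unbounded \emph{horizontal} path-width when $p$ and $q$ share a column, or unbounded \emph{vertical} path-width when they share a row; a bumper-ended path $p_1,\dots,p_k$ must have both $(p_2,p_1)$ and $(p_{k-1},p_k)$ be bumpers. This is a directional condition relating the endpoint to its neighbor on the path, and the threshold is \emph{one} direction of unboundedness (perpendicular to the connection), not both. Your guess — an endpoint cell ``rich enough'' to contain arbitrarily long horizontal and vertical alternations — is strictly stronger, so it would misclassify paths whose endpoints have unbounded path-width only in the relevant direction, and the dichotomy you would prove would not be tight. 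The correct definition is exactly what makes Lemma~\ref{lem:LR-pw} applicable at the endpoints: unbounded horizontal path-width of $\cM_{p_1}$ (sharing a column with $p_2$) is equivalent, by that lemma, to $\cM_{p_1}$ containing a horizontal monotone juxtaposition, which is what Lemma~\ref{lem:bumper-path} uses to fold the two copies of the refined path into a single monotone \emph{cycle} and hand off to Proposition~\ref{prop:cyclic-grid}.

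Second, and more seriously, your polynomial-time direction does not go through as described. You propose an outer caterpillar over cells ``as in Proposition~\ref{prop:small-pw}'', using Lemma~\ref{lem-albert} (consistent orientations) and peeling off extremal points in the forest $\vec G_\cM$. But consistent orientations are only defined for monotone gridding matrices; a general cell with bounded grid-width has no orientation in that sense, and the extremal-point argument of Proposition~\ref{prop:small-pw} relies on the content of a cell being monotone. The paper's proof of Proposition~\ref{prop:bumper-bounded} is genuinely different: it roots the forest at a vertex $r$ such that no path from $r$ ends in a bumper (this is the precise place the bumper-free hypothesis is used), builds a \emph{point-level} auxiliary forest $G_\pi$ whose trees are rooted in $\pi_r$, and constructs the grid tree point-by-point so that the subtree below any node is a set of the form $\overline{S}$ (a set together with all its $G_\pi$-descendants). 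The key technical ingredient you are missing is Lemma~\ref{lem:hpw-propgation}: a permutation from a class of bounded horizontal path-width has $\inty(S)\le\alpha\intx(S)$, and it is precisely this propagation that lets the intervalicity bound pass from a cell to its children in $T_\cM$ and bottoms out in the bound $h(r)$. Finally, note that the theorem claims only bounded \emph{grid-width} here, not bounded path-width as in the monotone case; a caterpillar outer tree would aim for path-width, and it is not clear a caterpillar construction could close the argument — the inner subtrees genuinely need to be trees, not chains.

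Your reduction for the cycle case (choose a monotone subclass per cell, apply Corollary~\ref{cor:unbounded-gw} and Proposition~\ref{prop:cyclic-grid}) matches the paper, and your intuition that the bumper lets you ``fold the refinement back onto itself'' is exactly what Lemma~\ref{lem:bumper-path} makes precise — but the clean move is to build a monotone cycle subclass and then reuse the cycle machinery wholesale, rather than re-running the gadget construction with the bumper in a custom role.
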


Unlike monotone grid classes, the gridding matrices of general grid classes may contain entries 
corresponding to finite nonempty classes. However, we can ignore these entries without affecting the 
properties we are interested in. To see this, let $\cM'$ be the gridding matrix obtained by removing 
all finite entries from a gridding matrix~$\cM$. Note that the cell graph of $\cM$ is equal to the 
cell graph of~$\cM'$. Moreover, the NP-completeness of \PPPM{$\Grid(\cM')$} trivially implies the 
NP-completeness of \PPPM{$\Grid(\cM)$}. Finally, $\Grid(\cM')$ has bounded grid-width if and only if 
$\Grid(\cM)$ has bounded grid-width since inserting a constant number of points into a permutation 
increases its grid-width at most by a constant. Thus, if $\cM'$ satisfies one of the two options in 
Theorem~\ref{thm:general-grid}, then $\cM$ satisfies this option as well. From now on, we will 
assume that $\cM$ contains only infinite (or empty) entries.

%

One natural 
way to generalize the notion of monotone classes is to consider classes that have bounded 
path-width in left-to-right ordering or 
in the bottom-to-top ordering. For permutation $\pi$, the \emph{horizontal path-width} is
$\pw^\sigma(\pi)$ where $\sigma_i = i$, and 
the \emph{vertical path-width} is $\pw^\sigma(\pi)$ where $\sigma_i = 
\pi^{-1}_i$.
The horizontal path-width was introduced independently by Ahal and Rabinovich~\cite{Ahal2000} and 
Albert et al.~\cite{Albert2001} in the context of designing permutation pattern matching 
algorithms. Moreover, there is a connection to the so-called 
insertion-encodable classes which appear often in the area of permutation classes enumeration, see 
e.g.~\cite{Albert2005,Bean2019,Vatter2012}. 

A \emph{horizontal monotone juxtaposition} is a monotone grid class $\Grid(\cC\;\cD)$ where both 
$\cC$ and $\cD$ are non-empty. Similarly, a \emph{vertical monotone juxtaposition} is a monotone 
grid class $\Grid\left(\begin{smallmatrix}\cC \\ \cD\end{smallmatrix}\right)$. The following two 
lemmas are stated for the horizontal path-width, but their symmetric versions hold for the vertical 
path-width. The next lemma was also proved, in a different form, by Albert et al.~\cite{Albert2005} 
in the context of regular insertion encodings.
\begin{lemma}
\label{lem:LR-pw}
For a permutation class $\cC$ the following are equivalent:
\begin{enumerate}[(a)]
\item $\cC$ has unbounded horizontal path-width,
\item $\cC$ contains arbitrarily large horizontal alternations, and 
\item $\cC$ contains a horizontal monotone juxtaposition as a subclass.
\end{enumerate}
\end{lemma}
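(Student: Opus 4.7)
The plan is to establish the cyclic chain $(c) \Rightarrow (a) \Rightarrow (b) \Rightarrow (c)$. The easy direction $(c) \Rightarrow (a)$ is witnessed by the canonical ``zippers'' of the juxtaposition: if $\cC \supseteq \Grid(\cD_1\;\cD_2)$ with $\cD_1, \cD_2 \in \{\Inc, \Dec\}$ nonempty, then for every $N$ the zipper whose left half uses the y-values $\{2,4,\ldots,2N\}$ and whose right half uses $\{1,3,\ldots,2N-1\}$, oriented monotonically according to $\cD_1$ and $\cD_2$, lies in $\cC$. Its length-$N$ left-to-right prefix has y-intervalicity exactly $N$, so the horizontal path-width is at least $N$.

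For $(a) \Rightarrow (b)$, I would exploit the observation that for the identity ordering $\sigma_i = i$ the prefix $\{(1, \pi_1), \ldots, (i, \pi_i)\}$ automatically has x-intervalicity $1$, so $\pw^\sigma(\pi) \ge N$ forces some prefix y-set $\{\pi_1, \ldots, \pi_i\}$ to decompose into at least $N$ maximal integer intervals separated by $N-1$ gaps. Since each gap must contain a y-value from $\{\pi_{i+1}, \ldots, \pi_n\}$, picking one ``left'' point from each interval and one ``right'' point from each gap yields $2N-1$ points whose left x-coordinates all precede the right ones and whose y-values strictly interleave left-right-left-right. After discarding a single extremal left endpoint to realign with the ``even entries before odd entries'' parity convention of the definition, one obtains a horizontal alternation of length $2N-2$ contained in $\cC$.

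For $(b) \Rightarrow (c)$, apply Erd\H{o}s--Szekeres to each half of an alternation of length $2N$ to extract a monotone subsequence of length $\lceil \sqrt{N} \rceil$ per side, producing arbitrarily large monotone-sided alternations in $\cC$. Pigeonholing over the four orientation combinations $(\cD_1, \cD_2) \in \{\Inc, \Dec\}^2$, some fixed type occurs in arbitrarily large alternations of $\cC$. I then verify that the canonical zipper of length $2N$ in $\Grid(\cD_1\;\cD_2)$ is universal for $\Grid(\cD_1\;\cD_2)$ at sizes up to $N$: the $N$ left-side y-ranks and $N$ right-side y-ranks interleave maximally, so any prescribed interleaving of L- and R-labeled points arising from a target permutation of size $k \leq N$ can be realized by picking suitable subsets of each side. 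Hence $\Grid(\cD_1\;\cD_2) \subseteq \cC$ for the winning type.

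The main obstacle will be the universality step in $(b) \Rightarrow (c)$: one must confirm that every possible merge pattern of L's and R's arising from a permutation in $\Grid(\cD_1\;\cD_2)$ occurs as a subsequence of the maximally alternating label sequence of the canonical zipper, which follows by direct selection but requires care to formalize for all four monotone types. The parity adjustment in $(a) \Rightarrow (b)$ is a minor but necessary bookkeeping step since the naive alternation produced has odd y-ranks on the left half rather than even ones.
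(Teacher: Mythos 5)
Your proposal is correct in its overall strategy and uses essentially the same ingredients as the paper: prefix intervalicity to pass between horizontal path-width and alternations, a double application of Erd\H{o}s--Szekeres plus pigeonholing to obtain a monotone-sided alternation, and universality of the canonical ``zipper'' for the final containment. You organize the three implications as a cycle $(c)\Rightarrow(a)\Rightarrow(b)\Rightarrow(c)$, while the paper proves the two biconditionals $(a)\Leftrightarrow(b)$ and $(b)\Leftrightarrow(c)$; this is a cosmetic difference. Your parity bookkeeping in $(a)\Rightarrow(b)$ is correct and in fact slightly more careful than the paper's phrasing, which reports an alternation of size $2k-1$ without attending to the even-before-odd convention.

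The one genuine looseness is in the Erd\H{o}s--Szekeres step of $(b)\Rightarrow(c)$. If you apply Erd\H{o}s--Szekeres to the two halves of a horizontal alternation \emph{independently}, the union of the two resulting monotone subsets need not be a horizontal alternation and need not be universal. For instance, starting from the alternation $48267351$, the increasing left subset $\{4,8\}$ and the decreasing right subset $\{7,3,1\}$ combine (after standardizing) into $35421$, which is not a horizontal alternation and does not even contain $123\in\Grid(\Inc\;\Dec)$, so it is far from universal for the juxtaposition class. The correct procedure, which the paper carries out, is sequential: apply Erd\H{o}s--Szekeres to one side, then re-select one interlacing element of the opposite side in each resulting gap (always possible in an alternation), apply Erd\H{o}s--Szekeres to this reselected side, and re-interlace once more. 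This preserves the alternating structure at only a polynomial cost in size and yields a genuine monotone horizontal alternation, after which your pigeonhole and zipper-universality argument goes through as planned.
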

\begin{proof}
Suppose (a) holds and for every integer $k$ there is a permutation $\pi^k \in \cC$ such that the
horizontal path-width of $\pi^k$ is at least $k$. Then there is $i$ such that the set $\{\pi^k_1, 
\dots, \pi^k_i\}$ has intervalicity at least $k$. Each pair of neighboring intervals is separated 
by $\pi^k_j$ for some $j > i$. Therefore, $\pi^k$ contains a horizontal 
alternation of size at least $2k-1$ which proves (b). On the other hand, a horizontal alternation 
of size $2k$ must have a horizontal path-width at least $k$ and thus (b) implies (a).
	
Now suppose that (b) holds. A \emph{monotone horizontal alternation} is a horizontal alternation 
whose set of odd entries and set of even entries both form monotone sequences. We claim that every 
horizontal alternation $\pi$ of size $2k^4$ contains a monotone horizontal alternation of size 
$2k$. By applying the Erdős–Szekeres theorem~\cite{Erdos1935} on the odd entries of $\pi$ we obtain 
a horizontal alternation $\pi'$ of size at least $2k^2$ whose odd entries form a monotone sequence. 
Applying the Erdős–Szekeres theorem again on the even entries of $\pi'$ yields a monotone 
horizontal alternation $\pi''$ of size at least $2k$. Therefore, $\cC$ contains arbitrarily large 
monotone horizontal alternations. There are only four possible types of such alternations depending 
on the type of the monotone sequences. Therefore, $\cC$ also contains arbitrarily large 
alternations belonging to a horizontal monotone juxtaposition $\Grid(\cD_1\;\cD_2)$ for some choice 
of $\cD_1,\cD_2\in\{\Inc,\Dec\}$. Since every $\sigma \in \Grid(\cD_1\;\cD_2)$ is contained in a 
sufficiently large monotone alternation, in fact $\cC$ must contain the whole class 
$\Grid(\cD_1\;\cD_2)$ as a subclass.
	
On the other hand, if (c) holds then $\cC$ contains arbitrarily large monotone horizontal 
alternations which trivially implies (b).
\end{proof}


\begin{lemma}
\label{lem:hpw-propgation}
Let $\pi$ be a permutation from a class $\cC$ with bounded horizontal path-width and let 
$S$ be a subset of $\pi$ such that $\intx(S)=  k$. Then $\inty(S) \leq \alpha k$ 
where the constant $\alpha$ depends only on $\cC$.
\end{lemma}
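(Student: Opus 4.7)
\emph{Proof proposal.} My plan is to slice $S$ along the $x$-components $A_1,\dotsc,A_k$ of $\Pi_x(S)$ and bound the $y$-intervalicity of each slice by a constant depending only on $\cC$, then sum.

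The starting observation is that since $\pi$ is a permutation, each $x$-coordinate in $[n]$ is occupied by exactly one point of $\pi$, so the subset $S\subseteq\pi$ is uniquely determined by its $x$-projection: explicitly, $S=\{(i,\pi_i):i\in\Pi_x(S)\}$. Writing $A_l=[b_l,a_l]$ for $l=1,\dotsc,k$, this yields $\Pi_y(S)=\bigcup_{l=1}^k\{\pi_i:i\in A_l\}$, and subadditivity of intervalicity under union gives $\inty(S)\le\sum_{l=1}^k\intv(\{\pi_i:i\in A_l\})$.

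To bound each slice, let $\beta$ be an upper bound on the horizontal path-width of permutations in $\cC$, a constant depending only on $\cC$. By the definition of horizontal path-width, every prefix $\{\pi_1,\dotsc,\pi_j\}$ has $y$-intervalicity at most $\beta$. Since
\[
\{\pi_i:i\in A_l\}=\{\pi_1,\dotsc,\pi_{a_l}\}\setminus\{\pi_1,\dotsc,\pi_{b_l-1}\},
\]
and a short sweep-line argument yields $\intv(X\setminus Y)\le\intv(X)+\intv(Y)$ for any $X,Y\subseteq[n]$ (removing a union of $q$ intervals from a union of $p$ intervals can create at most $p+q$ components), each slice has intervalicity at most $2\beta$. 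Summing gives $\inty(S)\le 2\beta k$, so the lemma holds with $\alpha=2\beta$.

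The proof presents no serious obstacle; the only step that is not purely routine is the initial observation. One might fear that an arbitrary subset of the point set could have uncontrolled $y$-intervalicity, but the permutation structure forces $S$ to contain \emph{every} $\pi$-point in each of its $x$-columns, converting the analysis into a standard estimate on the intervalicities of prefixes and their differences.
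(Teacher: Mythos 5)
Your proof is correct, and it takes a genuinely more direct route than the paper's. The paper first invokes Lemma~\ref{lem:LR-pw} to translate bounded horizontal path-width into the avoidance of all horizontal alternations of some fixed size $l$, and then, for each slice $S_I$ over an interval $I$ of $\Pi_x(S)$, runs a pigeonhole argument on the gaps of $\Pi_y(S_I)$ to manufacture a forbidden alternation, obtaining the bound $\inty(S)\le k(2l-1)$. You instead use the definition of horizontal path-width literally: each prefix $\{\pi_1,\dots,\pi_j\}$ has $y$-intervalicity at most the path-width bound $\beta$, each slice $\{\pi_i:i\in A_l\}$ is a set difference of two prefixes, and the elementary inequalities $\intv(X\setminus Y)\le\intv(X)+\intv(Y)$ and $\intv(X\cup Y)\le\intv(X)+\intv(Y)$ finish the job, giving $\alpha=2\beta$. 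Your argument bypasses Lemma~\ref{lem:LR-pw} entirely and is self-contained; the paper's version, by routing through forbidden alternations, makes the constant $\alpha$ explicit in terms of the juxtaposition structure of $\cC$, which fits the way Lemma~\ref{lem:LR-pw} is used elsewhere (e.g.\ in Lemma~\ref{lem:bumper-path}), but is not logically required here. Both proofs exploit the same crucial observation that $S$, being a subset of a permutation diagram, is determined by $\Pi_x(S)$; you make this explicit, while the paper relies on it implicitly when asserting that a gap in $\Pi_y(S_I)$ forces a point of $\pi$ outside $I$.
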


\begin{proof}
By Lemma~\ref{lem:LR-pw}, there exists an $l$ such that $\cC$ does not contain any vertical 
alternation of size $l$. Let $\cI$ be the interval family of size $k$ such that $\bigcup \cI = 
\Pi_x(S)$ and let $I$ be an interval of~$\cI$. Let $S_I$ be the subset of $S$ such 
that $\Pi_x(S_I) = I$ and let $\cJ$ be the smallest interval family such that $\Pi_y(S_I) = \bigcup 
\cJ$.
We claim that $\cJ$ contains at most $2l-1$ intervals. For contradiction, suppose that the size of 
$\cJ$ is at least $2l$. Then between each pair of consecutive intervals of $\cJ$ there is a value 
$j$ such that $\pi^{-1}_j$ lies outside the interval $I$. There is at least $2l -1$ gaps between 
intervals of $\cJ$ and therefore by the pigeon-hole principle either $l$ of them contain a point to 
the right of $I$ or at least $l$ of the gaps contain a point to the left of $I$. Either way, we 
obtain a horizontal alternation of size $l$, which is a contradiction.

For each interval $I \in\cI$, we showed that the intervalicity of $\Pi_y(S_I)$ is at most $2l -1$ 
and thus the  intervalicity of $\Pi_y(S)$ is at most $k(2l-1)$.
\end{proof}

An ordered pair $(p,q)$ of vertices in $G_\cM$ is a \emph{bumper} if either $\cM_{q}$ has unbounded 
horizontal path-width and shares the same column with $\cM_{p}$, or if $\cM_{q}$ has unbounded 
vertical path-width and shares the same row with  $\cM_{p}$. A \emph{bumper-ended path} is a path 
$P =  p_1, \ldots, p_k$ in $G_\cM$ such that both $(p_2,p_1)$ and $(p_{k-1},p_k)$ are bumpers.

\begin{figure}
\centering
\raisebox{-0.5\height}{\includegraphics[scale=0.4]{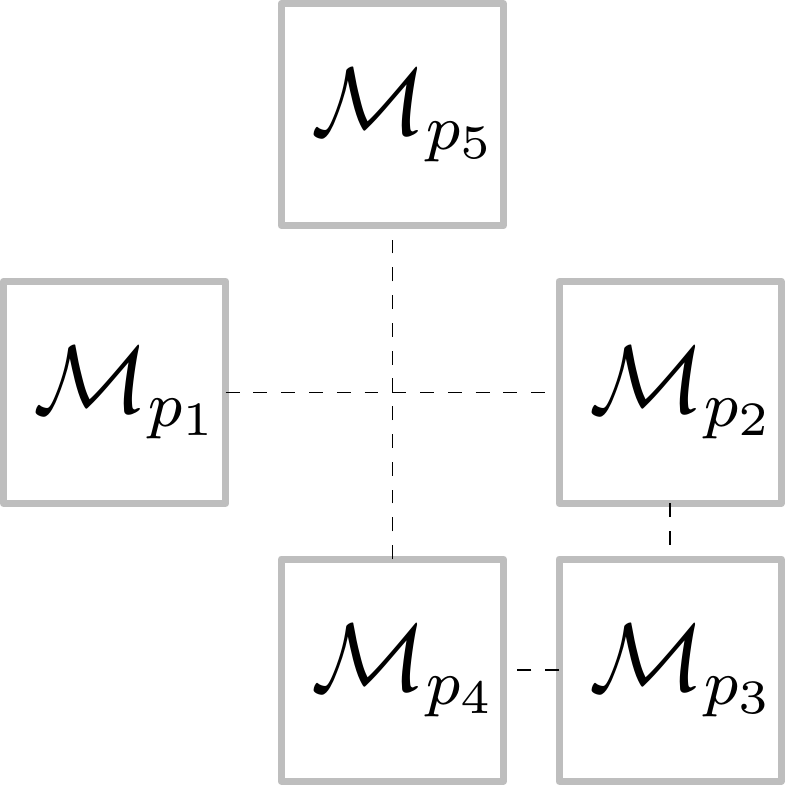}}
\hspace{0.05in}
{\huge$\rightarrow$}
\hspace{0.05in}
\raisebox{-0.5\height}{\includegraphics[scale=0.4]{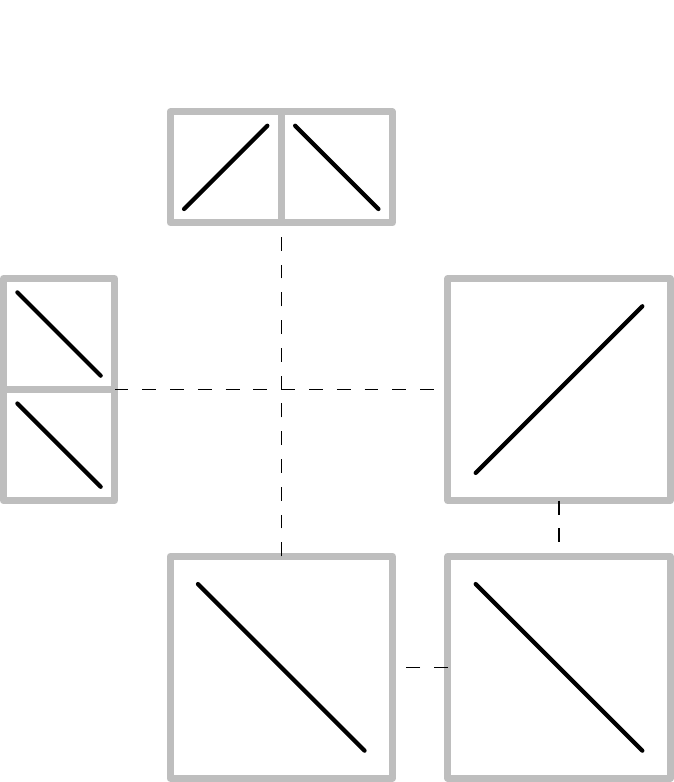}}
\hspace{0.05in}
{\huge$\rightarrow$}
\hspace{0.05in}
\raisebox{-0.5\height}{\includegraphics[scale=0.4]{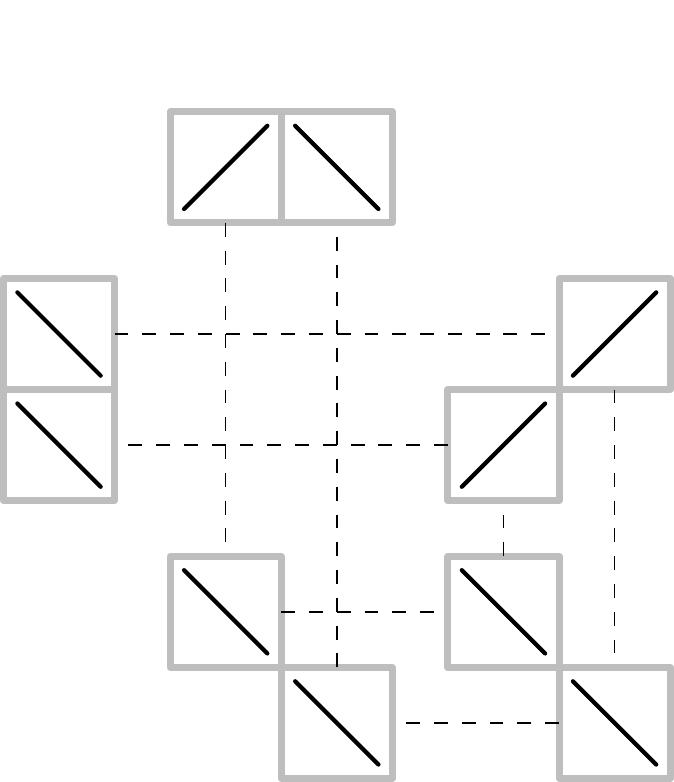}}
\caption{Transforming a bumper-ended path to cycle in the proof of Lemma~\ref{lem:bumper-path}.}
\label{fig:bumper-to-cycle}
\end{figure}

\begin{lemma}
\label{lem:bumper-path}
If $G_\cM$ contains a bumper-ended path then $\Grid(\cM)$ has unbounded grid-width and the 
problem \PPPM{$\Grid(\cM)$} is NP-complete.
\end{lemma}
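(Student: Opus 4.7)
The plan is to reduce to the cyclic monotone case. Concretely, I would construct a monotone gridding matrix $\cM^\star$ with $\Grid(\cM^\star) \subseteq \Grid(\cM)$ whose cell graph $G_{\cM^\star}$ contains a cycle; both conclusions of the lemma then follow immediately by applying Corollary~\ref{cor:unbounded-gw} and Proposition~\ref{prop:cyclic-grid} to $\cM^\star$, since unbounded grid-width of a subclass and NP-hardness of pattern matching on a subclass both pass up to $\Grid(\cM)$.

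Fix a bumper-ended path $p_1, \ldots, p_k$ in $G_\cM$. By the definition of a bumper, each of $\cM_{p_1}$ and $\cM_{p_k}$ has unbounded horizontal or vertical path-width, so Lemma~\ref{lem:LR-pw} gives a monotone juxtaposition as a subclass of each endpoint class. Every interior class $\cM_{p_i}$ for $1<i<k$ is infinite and therefore contains $\Inc$ or $\Dec$ as a subclass by Erd\H{o}s--Szekeres, which I would fix as a monotone subclass for the interior.

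To build $\cM^\star$, I would first empty every cell of $\cM$ that does not lie on the path and replace each interior $\cM_{p_i}$ by its chosen monotone subclass. Assume for concreteness that both end bumpers are horizontal, so $p_1, p_2$ share a column, $p_{k-1}, p_k$ share a column, and $\cM_{p_1} \supseteq \Grid(\cD_1\;\cD_2)$, $\cM_{p_k} \supseteq \Grid(\cD_1'\;\cD_2')$ for some monotone $\cD_i, \cD_i'$. I would then introduce a single fresh column, place $\cD_2$ in it at the row of $p_1$ and $\cD_2'$ at the row of $p_k$, and leave $\cD_1, \cD_1'$ at the original positions of the two endpoints. The two new cells $q_1, q_k$ are the only nonempty entries in the fresh column, so they are adjacent in $G_{\cM^\star}$, while each $q_\bullet$ is adjacent to its parent $p_\bullet$ via the shared row; combined with the original path $p_1 \cdots p_k$ this yields a cycle. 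A standard merging argument, as in the refinements of Section~\ref{sec:monotone}, shows $\Grid(\cM^\star) \subseteq \Grid(\cM)$.

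The main technical obstacle is verifying the claimed adjacencies in $G_{\cM^\star}$: no non-empty cell may lie in the row of $p_1$ strictly between $p_1$ and $q_1$, and similarly at~$p_k$. Once the off-path cells are emptied, the only remaining danger is that the path itself revisits a row of an endpoint, and this can be ruled out by choosing a shortest bumper-ended path in $G_\cM$ (which has no such chord). The remaining case distinctions---a vertical bumper at one endpoint, or at both, or one of each---are analogous: a vertical bumper is resolved by splitting its endpoint cell through a new row instead of a new column, and the mixed case closes the cycle through one new row and one new column, with some care needed to respect the original status of the corner cell in $\cM$. Once $\cM^\star$ is in hand, Corollary~\ref{cor:unbounded-gw} delivers unbounded grid-width in $\Grid(\cM^\star) \subseteq \Grid(\cM)$ and Proposition~\ref{prop:cyclic-grid} gives NP-completeness of \PPPM{$\Grid(\cM^\star)$}, and hence of \PPPM{$\Grid(\cM)$}.
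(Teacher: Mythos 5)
Your high-level plan — extract a monotone grid subclass of $\Grid(\cM)$ whose cell graph contains a cycle and then apply Corollary~\ref{cor:unbounded-gw} and Proposition~\ref{prop:cyclic-grid} — is exactly the paper's, and your use of Lemma~\ref{lem:LR-pw} to place a monotone juxtaposition at each endpoint is also the right idea. The gap is in the construction of $\cM^\star$. You insert a \emph{single} fresh column and put the second piece of $p_1$'s juxtaposition (at the row of $p_1$) and the second piece of $p_k$'s juxtaposition (at the row of $p_k$) into that one column. But then $\Grid(\cM^\star)\subseteq\Grid(\cM)$ fails in general, and the ``standard merging argument'' you cite does not apply: to merge the fresh column away you would have to absorb it into one adjacent original column, say that of $p_1$, and after that merge the cell $q_k$ would land in the $\cM$-cell at the column of $p_1$ and the row of $p_k$ — a cell that is typically empty in $\cM$. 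Since $p_1$ and $p_k$ generically occupy different columns, no placement of a single new column can be merged back consistently for both endpoints at once.

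The paper sidesteps this by refining \emph{every} row and column of $\cM$ by a factor of two. Each interior path cell $p_i$ becomes the refinement $\cC_i^{\times2}$ (a $2\times2$ diagonal or anti-diagonal block), each non-path cell becomes an empty $2\times2$ block, and each endpoint cell becomes $\left(\begin{smallmatrix}\emptyset & \emptyset\\ \cC_t & \cD_t\end{smallmatrix}\right)$. Because the split is uniform, merging each pair of refined columns and rows recovers an $\cM$-gridding, so $\Grid(\cM')\subseteq\Grid(\cM)$ holds automatically. The refinement turns the interior of the path into two parallel disjoint copies, and the two juxtaposition blocks at $p_1$ and $p_k$ join those copies at both ends, producing the required cycle. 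If you instead gave each endpoint its \emph{own} adjacent new column, you would restore $\Grid(\cM^\star)\subseteq\Grid(\cM)$, but then $q_1$ and $q_k$ would sit in different columns and not be adjacent in the cell graph — which is precisely why the uniform doubling, rather than a local insertion, is needed.
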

\begin{proof}
We aim to show that  $\Grid(\cM)$ must contain a cyclic monotone grid class as its subclass. The 
proof is illustrated in Figure~\ref{fig:bumper-to-cycle}.
Consider the bumper-ended path $p_1, p_2, \ldots, p_k$. Let us assume that both $M_{p_1}$ and 
$\cM_{p_k}$ have unbounded horizontal path-width as the other cases can be proved in an analogous 
way. Each of the infinite classes $\cM_{p_i}$ contains a monotone subclass $\cC_i$ due to the 
Erdős–Szekeres theorem~\cite{Erdos1935}. Moreover, the classes $\cM_{p_1}$ and $\cM_{p_k}$ 
contain a monotone juxtaposition by Lemma~\ref{lem:LR-pw}.
Let $\Grid(\cC_1\;\cD_1)$ be the juxtaposition contained in $\cM_{p_1}$ and $\Grid(\cC_k \; 
\cD_k)$ the juxtaposition contained in $\cM_{p_k}$. We define the monotone gridding matrix 
$\cM'$ by replacing every entry of $\cM$ with the following $2 \times 2$ matrix:
\begin{itemize}
\item entry $\cM_{p_i}$ for $i$ between 2 and $k-1$ is replaced with $\cC_i^{\times2}$ 
\item entry $\cM_{p_t}$ for $t \in \{1,k\}$ is replaced with 
$\left(\begin{smallmatrix}\emptyset & \emptyset\\ 
\cC_t &\cD_t\end{smallmatrix}\right)$, and
\item every other entry is replaced with an empty $2\times2$ matrix.
\end{itemize}

Clearly, $\Grid(\cM')$ is still a subclass of $\Grid(\cM)$. The $(i,j)$-block of $\cM'$ is the 
$2\times2$ submatrix obtained from the $(i,j)$-cell in $\cM$. If we forget about the blocks of 
$p_1$ and $p_k$ we are left with two disjoint copies of the original path. Adding back the blocks 
connects the endpoints of both paths together and creates a cycle since $p_2$ shares the same 
column with $p_1$ and $p_{k-1}$ shares the same column with~$p_{k}$. Thus, $\Grid(\cM')$ is a 
monotone grid subclass of $\Grid(\cM)$ whose cell graph contains a cycle and the claim follows from 
Proposition~\ref{prop:cyclic-grid}.
\end{proof}

\begin{proposition}
\label{prop:bumper-bounded} Let $\cM$ be a gridding matrix such that every entry of $\cM$ has bounded 
grid-width, and there are no nonempty finite entries. If $G_\cM$ is a forest that avoids a 
bumper-ended path then $\Grid(\cM)$ has bounded grid-width and \PPPM{$\Grid(\cM)$} can be decided 
in polynomial time. 
\end{proposition}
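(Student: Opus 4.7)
Proof plan: The aim is to establish that $\Grid(\cM)$ has bounded grid-width, which together with Theorem~\ref{thm:bounded-gw-algo} yields the polynomial-time algorithm for \PPPM{$\Grid(\cM)$}. I would proceed by induction on the number of non-empty cells of $\cM$, with the base case of no non-empty cells being trivial.

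For the inductive step, the first key observation is that a \emph{good leaf}, that is, a leaf $v$ of $G_\cM$ whose unique neighbor $p$ does not make $(p,v)$ a bumper, exists in every connected component of $G_\cM$ of size at least two. Indeed, if both endpoints of some leaf-to-leaf path of a component were bumpers, this path would be bumper-ended, contradicting the hypothesis; isolated vertices of $G_\cM$ correspond to single cells of bounded grid-width and can be handled directly. Assume by symmetry that $v$ shares a column with $p$, which by the definition of bumper forces $\cM_v$ to have bounded horizontal path-width; the case of a shared row is analogous. Observe further that $v$ must be the only non-empty cell in its row, as otherwise it would have a second neighbor in $G_\cM$, so the points of $\pi_v$ fill the entire $y$-interval of that row. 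Let $\cM'$ denote the matrix obtained from $\cM$ by emptying the entry at $v$. Since removing the leaf $v$ does not affect adjacencies among the remaining cells, $G_{\cM'}$ is still a forest avoiding a bumper-ended path, and by induction $\Grid(\cM')$ has bounded grid-width.

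Given $\pi \in \Grid(\cM)$, write $\pi = \pi' \cup \pi_v$ where $\pi' \in \Grid(\cM')$, and let $T'$ be a grid-tree of $\pi'$ of bounded width~$g'$ provided by the induction. The plan is to construct a grid-tree for $\pi$ by \emph{augmenting} $T'$: for each node $u$ of $T'$ representing a subset $S \subseteq \pi'$, form the augmented subset $S^+$ by adding all points of $\pi_v$ whose $x$-coordinate lies inside the minimal union of at most $\intx(S)$ integer intervals covering $\Pi_x(S)$. Because $\pi_v$ and $\pi'$ together exhaust all $x$-coordinates within the shared column, each such interval becomes entirely filled in $S^+$, giving $\intx(S^+)\leq \intx(S)\leq g'$. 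The contribution to $\inty(S^+)$ coming from the added $\pi_v$-points is controlled via the bounded horizontal path-width of $\cM_v$: an $x$-interval restriction of a permutation drawn from a class of bounded horizontal path-width has $y$-intervalicity bounded by a constant depending only on that class, so each of the up to $g'$ augmentation intervals introduces only a bounded number of $y$-intervals into $S^+$.

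The main technical obstacle is assembling these augmented subsets into a valid binary grid-tree for $\pi$, since the augmentation of an internal node need not coincide with the union of the augmentations of its two children—namely, points of $\pi_v$ lying in the gaps between the $x$-spans of the two children are only captured at the parent. I would resolve this by distributing such intermediate $\pi_v$-points among the children, or by inserting small auxiliary subtrees that handle them, in each case verifying that every node of the resulting tree retains grid-complexity bounded by a constant depending only on $\cM$. Once bounded grid-width of $\Grid(\cM)$ is established, invoking Theorem~\ref{thm:bounded-gw-algo} completes the proof.
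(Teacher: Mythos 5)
Your approach is genuinely different from the paper's: you propose an induction that peels off one leaf cell at a time and augments the inductively obtained grid tree, whereas the paper fixes a root cell $r$, builds a rooted tree $T_\cM$ of all cells, defines a point-level directed forest $G_\pi$ (each point in a non-root cell pointing to the nearest suitable point in its parent cell), and constructs the grid tree of $\pi$ in a single top-down pass around the grid tree of the standardized root cell, with the key quantitative control coming from Claim~\ref{claim:bounded-int-a}. Your preliminary observations are sound: a ``good leaf'' exists (otherwise a longest path in a component would be bumper-ended), such a leaf $v$ is alone in its row/column, $\cM_v$ has bounded horizontal (resp.\ vertical) path-width by the non-bumper hypothesis, removing $v$ preserves the forest and bumper-free conditions, and the augmented sets $S^+$ have grid-complexity bounded in terms of $g'$ and the constant from Lemma~\ref{lem:hpw-propgation}.

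However, the step you flag as ``the main technical obstacle'' is exactly where the substance of the proposition lies, and it is left unresolved. Bounding the grid-complexity of each $S^+$ does not produce a grid tree, because $S_u^+$ is strictly larger than $S_{u_1}^+ \cup S_{u_2}^+$ and the leaves of $T'$ remain singletons, so the $\pi_v$-points have no leaves of their own. The two fixes you gesture at are not obviously correct: a naive ``distribution'' of gap points to one of the children can increase $\intx$ of the child's set and destroy the bound, unless the assignment is chosen carefully (e.g.\ by matching each $\pi_v$-point to the nearest $\pi_p$-point on one side, which is precisely the paper's $G_\pi$); and the ``small auxiliary subtrees'' for the $\pi_v$-points must have bounded grid-complexity \emph{as subsets of $\pi$}, not merely of the standardized $\pi_v$, which requires an argument relating intervalicity of a cell's subset before and after standardization (the paper's Observation~\ref{obs:cons-int-a}) together with the bounded grid-width of $\cM_v$. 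Without working out one of these, the proof is incomplete at its crux; supplying the missing details would essentially reconstruct the paper's $G_\pi$-based machinery inside your induction.
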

\begin{proof}
First, suppose that $G_\cM$ contains more than one component. In that case, choose a component 
of $G_\cM$, and let $\cM_1$ be the submatrix of $\cM$ spanned by the rows and columns containing 
the vertices of the chosen component, while $\cM_2$ is the submatrix spanned by the remaining 
rows and colums of~$\cM$. An $\cM$-gridded permutation 
$\pi$ can be partitioned into two subpermutations $\pi_1$ and $\pi_2$ where $\pi_i$ is the 
$\cM_i$-gridded subpermutation of $\pi$ consisting of the rows and columns of~$\cM_i$. Let $T_i$ be 
the optimal grid tree of~$\pi_i$. 
We define a grid tree $T$ of $\pi$ by taking a root vertex with children $T_1$ and $T_2$. The 
grid-complexity of any vertex in $T_1$ or $T_2$ has increased at most by $\max(k,\ell)$ where 
$k$ and $\ell$ are the dimensions of $\cM$. Therefore $\gw(\pi) \le \max(\gw(\pi_1), 
\gw(\pi_2))+\max(k,\ell)$. Applying this argument inductively shows that $\Grid(\cM)$ has bounded 
grid-width if and only if the grid-width of $\Grid(\cM')$ is bounded for every submatrix $\cM'$ of 
$\cM$ spanned by a connected component $G_\cM$. In the rest of the proof, we assume that 
$G_\cM$ is a tree. The proof is  based on a sequence of claims that will be stated and proven 
independently.

\begin{claim}\label{cla-r}
The tree $G_\cM$ contains a vertex $r$ such that for any other vertex $q\neq r$, the path from $r$ 
to $q$ does not end in a bumper. 
\end{claim}
Assume for contradiction that the claim fails. Let $r$ be any vertex of $G_\cM$. By assumption, 
there is a vertex $q\neq r$ such that the path from $r$ to $q$ ends in a bumper $(p,q)$. Choose 
such a vertex $q$ as far as possible from~$r$. Applying our assumption for $q$ in the role of $r$, 
there is a vertex $q'\neq q$ such that the path from $q$ to $q'$ ends in a bumper $(p',q')$. If the 
path from $q$ to $q'$ contains the vertex $p$, then it is a bumper-ended path, which is impossible. 
If the path from $q$ to $q'$ avoids $p$, it means that the path from $r$ to $q'$ ends in the bumper 
$(p',q')$ and is strictly longer than the path from $r$ to $q$, which contradicts the choice 
of~$q$. This proves Claim~\ref{cla-r}.

\begin{figure}
\centerline{\includegraphics[width=0.7\textwidth]{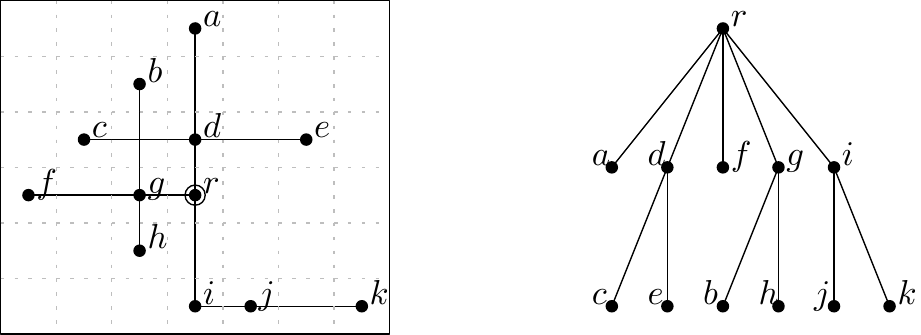}}
 \caption{Example of a gridding matrix $\cM$ with the tree $G_\cM$ (left) and the corresponding 
rooted tree $T_\cM$ (right).}\label{fig-tm}
\end{figure}

Let $r$ be the vertex of $G_\cM$ whose existence is guaranteed by Claim~\ref{cla-r}. We now define 
a rooted tree $T_\cM$ on the same vertex set as $G_\cM$ as follows (see Figure~\ref{fig-tm}). The 
vertex $r$ is the root $T_\cM$. For a vertex $v \neq r$ in $G_\cM$, we set the parent of $v$ in 
$T_\cM$ to be the furthest vertex on the $vr$-path in $G_\cM$ that shares the same row or column  
with $v$. Observe that whenever a vertex $v$ shares the same column with its parent $w$ in $T_\cM$ 
then the entry $\cM_v$ has bounded horizontal path-width, and whenever $v$ shares the same row with 
$w$, the entry $\cM_v$ has bounded vertical path-width. The \emph{dominant cell} of a row, or a 
column, is the cell $v$ such that all the other cells in the row, or column, are its children 
in~$T_\cM$.

Let $\pi$ be an $\cM$-gridded permutation, and let $\pi_v$ denote the subset of points contained in 
a cell~$v$. Assume that $\pi_v$ is nonempty whenever $\cM_v$ is a nonempty cell of~$\cM$. We define 
an auxiliary directed graph $G_\pi$ on the points of $\pi$ whose every connected component is a 
tree rooted in some point of~$\pi_r$. Suppose that the vertex $v$ of $T_\cM$ shares the same 
column with its parent~$w$. The parent of a point $p$ in $\pi_v$ is the nearest point in 
$\pi_w$ to the right of $p$, and if there is no such point ($p$ lies to the right of all the 
points of~$\pi_w$) then the rightmost point in~$\pi_w$. If $v$ and $w$ share the same 
row, then the parent of $p$ is the nearest point in $\pi_w$ above $p$, or if there is no such 
point ($p$ lies above all the points of $\pi_w$) then the topmost point in~$\pi_w$. Let 
$P$ be a subset of the permutation diagram of~$\pi$. The point set $\overline{P}$ contains $P$ and 
every point that lies in $G_\pi$ in a subtree of some point $p \in P$.

Let $v$ be a non-empty cell such that $\pi_v$ contains $m$ points. The permutation 
$\widetilde{\pi_v}$ is 
the standardized version of $\pi_v$, i.e. the point set inside $[m]\times[m]$ that is isomorphic to 
$\pi_v$. The construction of the graph $G_\pi$ guarantees the following property and its symmetric 
version.

\begin{observation}
\label{obs:cons-int-a} 
Let $v$ be the dominant cell of its row. Let $S$ be a subset of $\pi_v$, let $\widetilde{S}$ be the 
corresponding subset of the standardized $\widetilde{\pi_v}$ and let $S'$ be the set containing $S$ 
and all
its children in $G_\pi$ that lie in the same row. Then $\inty(\widetilde{S}) = 
\inty(S')$. 
Symmetrically, if $v$ is dominant in its column, $S$ and $\widetilde{S}$ are as above, and $S'$ 
contains $S$ 
and all its children in $G_\pi$ in the same column, then $\intx(\widetilde{S}) = 
\intx(S')$. 
\end{observation}

We inductively define a function $h$ on the vertex set of $T_\cM$, which will later serve us as an 
upper bound for the grid-width of any $\pi \in \Grid(\cM)$. For any leaf $u$ of 
$T_\cM$ we set $h(u)=1$. For any other vertex $v$, we let $W$ be the set of children of $v$ and 
define $h(v) = 1 + 
\sum_{w\in W} \alpha_{w}h(w)$ where $\alpha_{w}$ is the constant obtained as follows. If $v$ 
shares a column with $w$, then $\alpha_{w}$ is the constant from Lemma~\ref{lem:hpw-propgation} 
applied on the class $\cM_{w}$, otherwise it is the constant from the `vertical' version of 
Lemma~\ref{lem:hpw-propgation} applied on the class $\cM_{w}$. We state only one of the symmetric 
versions of the following two claims. However, we are proving both of them simultaneously by 
induction.

\begin{claim}
\label{claim:bounded-int-a}
Let $S$ be a subset of the $i$-th column of the $\cM$-gridding of $\pi$ such that $\intx(S)=1$.   
Let $v$ be the dominant cell of the $i$-th column and let $S_v = S\cap \pi_v$. Then 
$\intx(\overline{(S \setminus S_v)}\cup S_v) \leq h(v)$ 
and $\inty(\overline{S\setminus S_v}) \leq h(v) - 1$.
\end{claim}
We remark that if $v$ is not equal to the root $r$, then the set $\overline{(S \setminus S_v)}\cup 
S_v$ from the claim above is actually equal to~$\overline{S}$.

To prove Claim~\ref{claim:bounded-int-a}, suppose first that $v$ is the only nonempty cell in its 
column, and therefore $v$ is a leaf of $T_\cM$. Then $S=S_v$, and hence
$\intx(\overline{(S\setminus S_v)}\cup S_v)=\intx(S) = 1=h(v)$ and 
$\inty(\overline{S\setminus S_v})=\inty(\emptyset) = 0=h(v)-1$, as claimed.

Now suppose that $v$ is not the only nonempty cell in its column, and let $C$ be the set of 
nonempty cells different from $v$ in the same column as~$v$. Note that each cell in $C$ is
a child of $v$ in $T_\cM$, and if $v\neq r$, then $C$ is precisely the set of children 
of~$v$. Observe that $\overline{S\setminus S_v}$ is a disjoint union of the sets $\overline{S_w}$ 
over all $w\in C$. For a cell $w\in C$, let $S_w$ be the set $S\cap\pi_w$, let 
$\widetilde{\pi_w}$ be 
the standardization of $\pi_w$, and let $\widetilde{S_w}$ be the subset of $\widetilde{\pi_w}$ that 
corresponds to~$S_w$.  
From Lemma~\ref{lem:hpw-propgation} we get $\inty(\widetilde{S_w})\le \alpha_w$, and in 
particular, 
$\widetilde{S_w}$ can be partitioned into sets $\widetilde{S_w^1}, 
\widetilde{S^2_w},\dotsc,\widetilde{S^{\ell_w}_w}$ for some $\ell_w\le 
\alpha_w$, where $\inty(\widetilde{S^i_w})=1$ for each~$i$. Let $S^i_w$ be the 
subset of 
$\pi_w$ that corresponds to $\widetilde{S_w^i}$ in the standardization~$\widetilde{\pi_w}$. Let 
$R_w^i$ be the set 
$S_w^i$ together with all its children in $G_\pi$ that lie in the same row. By 
Observation~\ref{obs:cons-int-a}, for each $i$ we have $\inty(R_w^i)=1$. Using the symmetric version 
of Claim~\ref{claim:bounded-int-a} with each $R_w^i$ in 
the role of $S$ shows that

\begin{align*}
 \inty(\overline{S\setminus S_v})&\le \sum_{w\in C} \inty(\overline{S_w})\\
&\le \sum_{w\in C} \sum_{i=1}^{\ell_w} \inty(\overline{S_w^i})\\
&= \sum_{w\in C} \sum_{i=1}^{\ell_w} \inty\left(\overline{(R_w^i\setminus 
S_w^i)}\cup S_w^i\right)\\
&\le \sum_{w\in C} \alpha_w h(w)\\
&\le h(v)-1,
\end{align*}
and similarly,
\begin{align*}
\intx(\overline{(S \setminus S_v)}\cup S_v) 
&=\intx\left(S\cup\bigcup_{w\in C} \overline{S_w}\right)\\
&=\intx\left(S\cup\bigcup_{w\in C}\bigcup_{i=1}^{\ell_w} \overline{R_w^i\setminus 
S_w^i}\right)\\
&\le 1+\sum_{w\in C}\sum_{i=1}^{\ell_w}\intx(\overline{R_w^i\setminus S_w^i})\\
&\le 1+\sum_{w\in C}\alpha_w (h(w)-1)\\
&\le h(v),
\end{align*}
proving Claim~\ref{claim:bounded-int-a}.

We will now define, for every $p\in \pi$, a grid tree $T_p$ whose leaves are exactly the points 
in~$\overline{\{p\}}$. The definition proceeds inductively on the size of~$\overline{\{p\}}$. If $p$ 
has no children in~$G_\pi$, that is if $\overline{\{p\}} =\{p\}$, then $T_p$ consists of the single 
vertex~$p$. Suppose now that $p$ has at least one child in~$G_\pi$. Recall that each child of $p$ 
belongs to a cell in the gridding of $\pi$ which is in the same row or in the same column as the cell 
of~$p$. Let $C$ and $R$ denote, respectively, the set of children of $p$ in the same column and the 
set of children of $p$ in the same row. Note that $C$ and $R$ are disjoint, and if $p$ does not 
belong to the root cell $\pi_r$ then one of $C$ and $R$ is empty.

Recall that a caterpillar is a binary tree whose every internal node has at least one leaf child. 
Note that the leaves of a caterpillar can be ordered top to bottom by their distance from the root, 
where the order of the bottommost pair of leaves is irrelevant. 

If $C$ is nonempty, we construct a tree $T^C_p$ in the following two steps:
\begin{itemize}
\item Construct a caterpillar whose leaves are the points from $C\cup\{p\}$, and the top-to-bottom 
order of the leaves in the caterpillar coincides with the left-to-right order of the points in~$\pi$.
\item In the caterpillar constructed above, for each $q\in C$ replace the leaf $q$ with a copy of 
the tree~$T_q$. Call the resulting tree $T^C_p$.
\end{itemize}

Symmetrically, if $R$ is nonempty, construct a tree $T^R_p$ by first taking the caterpillar whose 
leaves in top-to-bottom order are the points of $R\cup\{p\}$ in top-to-bottom order, and then for 
each $q\in R$ replace the leaf $q$ with a copy of~$T_q$.

If the set $R$ is empty, we define $T_p=T^C_p$, and if $C$ is empty, we define $T_p=T^R_p$. If 
both $C$ and $R$ are nonempty (which may only happen when $p$ is in $\pi_r$), we let $T_p$ be the 
tree obtained by replacing the leaf $p$ in $T^C_p$ by a copy of~$T^R_p$. Note that in all the cases, 
the leaves of $T_p$ form precisely the set~$\overline{\{p\}}$.

\begin{claim}\label{cla-tree}
Let $v$ be a nonempty cell of~$\cM$. If $v\neq r$, then for every $p\in\pi_v$, the tree $T_p$ has 
grid-width at most~$h(v)$. For every $p\in r$, the tree $T_p$ has grid-width at most~$2h(r)$.
\end{claim}

We prove the claim by induction on the size of~$T_p$. The claim clearly holds when $T_p$ is the 
single vertex~$p$. Suppose now that $T_p$ has more vertices, and that $v\neq r$. In such case $T_p$ 
is equal to $T^C_p$ or to $T^R_p$. Suppose that $T_p=T^C_p$, the other case being symmetric. Let $C$ 
be again the set of children of $p$ in $G_\pi$ (necessarily, they are all in the same column of the 
gridding as the point $p$, since $v$ is not the root vertex). Let $u$ be a node of $T_p$, and let 
$L_u$ be the set of leaves of the subtree of $T_p$ rooted at~$u$. Our goal is to show that the grid 
complexity of $L_u$ is at most~$h(v)$. If $u$ is the leaf $p$ or $u$ is inside a copy of $T_q$ for 
some $q\in C$, the claim follows by induction. Suppose that $u$ is a node of the caterpillar from 
which $T_p$ was constructed. Let $S$ be the set of points in $L_u$ that are in the same gridding 
column as~$p$. By the construction of $T_p$, the set $S$ satisfies $\intx(S)=1$, and
$L_u$ is equal to~$\overline{S}$. Note that the set $S_v=S\cap \pi_v$ is either empty or contains 
the single point $p$. By Claim~\ref{claim:bounded-int-a}, 
\[
\intx(L_u)=\intx(\overline{S})=\intx(\overline{(S\setminus S_v)}\cup S_v)\le h(v)
\]
and
\[
\inty(L_u)\le \inty(S_v)+\inty(\overline{S\setminus S_v})\le \inty(\{p\})+\inty(\overline{S\setminus 
S_v})\le 1+(h(v)-1)=h(v).
\]
This shows that $L_u$ has grid complexity at most $h(v)$, and therefore $T_p$ has grid-width at most 
$h(v)$.

It remains to deal with the case when $p$ belongs to the root cell~$r$. Using the same argument as 
in the first part of the proof, we again see that both $T_p^C$ and $T_p^R$ have grid-width at 
most~$h(r)$. Moreover, for each node $u$ of $T_p$ the subtree of $T_p$ rooted at $u$ is either equal 
to a subtree of $T_p^C$, or it is equal to a subtree of $T_p^R$, or it contains the entire tree 
$T_p^R$ together with a subtree of $T_p^C$. In the former two cases, the set of leaves of the subtree 
has grid complexity at most $h(r)$, in the last case it has grid complexity at most $2h(r)$. This 
proves Claim~\ref{cla-tree}.

We are ready to construct a grid tree $T$ of the permutation~$\pi$ and provide a bound on its 
grid-width. By assumption, the entries of $\cM$ have bounded grid-width, and we let $g$ be the 
grid-width of the root entry~$\cM_r$. Let $\pi^*_r$ be the standardization of $\pi_r$, and let $T_r$ 
be the optimum grid tree of $\pi^*_r$; in particular, $T_r$ has grid-width at most~$g$. A grid tree 
$T$ of the whole permutation $\pi$ is obtained by taking $T_r$ and replacing the leaf corresponding 
to a point $p\in\pi_r$ with the tree~$T_p$. We claim that $T$ has grid-width at most $4gh(r)$. The 
tree $T$ contains every point of~$\pi$, and we showed in Claim~\ref{cla-tree} that the grid-width of 
any node contained in a copy of some $T_p$ is at most~$2h(r)$. 

Let now $u$ be a node of $T$ that is not contained in any copy of the tree $T_p$, in other words, 
$u$ is an internal node of~$T_r$. Let $L^*\subseteq\pi^*_r$ be the set of leaves of $T_r$ in 
the subtree rooted at~$u$, and let $L$ be the subset of $\pi_r$ that is mapped to $L^*$ by the 
standardization that maps $\pi_r$ to~$\pi_r^*$. Then the subset of $\pi$ contained in the subtree of 
$T$ rooted at $u$ is precisely~$\overline{L}$. 

Applying Observation~\ref{obs:cons-int-a}, we see that $L$ together with its neighbors in $G_\pi$ 
spans at most $g$ consecutive intervals in the row and column of the $r$-cell. By applying 
Claim~\ref{claim:bounded-int-a} individually on each of these $2g$ intervals, we get that the 
grid-complexity of $\overline{L}$ is at most $4gh(r)$. It follows that $\Grid(\cM)$ has bounded 
grid-width, and therefore \PPPM{$\Grid(\cM)$} can be decided in polynomial time.
\end{proof}

To complete the proof of Theorem~\ref{thm:general-grid}, it suffices to point out that if $G_\cM$ 
contains a cycle then $\Grid(\cM)$ contains a monotone grid subclass $\Grid(\cM')$ where $\cM'$ is 
obtained by replacing every infinite class in $\cM$ by its monotone subclass. Applying 
Proposition~\ref{prop:cyclic-grid} then wraps up the proof of Theorem~\ref{thm:general-grid}.

\section{Concluding remarks and open problems}

The \textsc{$\cC$-PPM} is the problem of determining whether a permutation $\pi \in \cC$ is 
contained in a permutation $\tau \in \cC$. Even though \PPPM{$\Av(321)$} is NP-complete, 
\textsc{$\Av(321)$-PPM} can be decided in polynomial time~\cite{Guillemot2009}. This leads to the 
natural question of whether the same can happen in the universe of grid classes.

\begin{problem}
Is there any (monotone) gridding matrix $\cM$ such that \PPPM{$\Grid(\cM)$} is NP-complete, 
while \textsc{$\Grid(\cM)$-PPM} can be decided in polynomial time?
\end{problem}

A \emph{path class of order $k$} is a monotone grid class whose cell graph is a path on $k$ 
vertices. Let us say that a permutation class $\cC$ \emph{contains paths of all orders}, if for 
every $k$, the class $\cC$ contains as a subclass a path class of order~$k$. Note that by 
Lemma~\ref{lem:long-path-width}, such a class $\cC$ has unbounded grid-width. In fact, all the 
known examples of classes with unbounded grid-width contain paths of all orders. We may therefore 
ask whether this property precisely characterizes the classes with unbounded grid-width. 

\begin{problem}
Does every class with unbounded grid-width contain paths of all orders?
\end{problem}

The existence of paths of all orders may also help with establishing the NP-completeness of 
\PPPM{$\cC$}. Suppose that $\cC$ is a class that contains paths of all orders. It is not hard to 
argue that such a class $\cC$ necessarily contains, for every $k$, a monotone grid subclass whose 
grid graph is a properly turning path on $k$ vertices. If we additionally assume that for a given 
integer $k$ we are able to construct, in time polynomial in $k$, such a properly turning path class 
of order $k$ contained in $\cC$, then we can adapt the hardness reduction from the proof of 
Proposition~\ref{prop:cyclic-grid} to show that \PPPM{$\cC$} is NP-complete.

The results of Ahal and Rabinovich~\cite{Ahal2000} imply that \PPPM{$\cC$} is polynomial whenever 
$\cC$ has bounded grid-width. On the other hand, in all the known examples of a class $\cC$ with 
unbounded grid-width where the complexity of \PPPM{$\cC$} is known, the \PPPM{$\cC$} problem is 
NP-complete. We wonder whether bounded grid-width might be the property characterizing the 
complexity of~\PPPM{$\cC$}.

\begin{problem}
Is it true that \PPPM{$\cC$} is NP-complete whenever $\cC$ has unbounded grid-width, and polynomial 
otherwise?
\end{problem}

\bibliography{gridwidth}

\end{document}